\newcommand{\mcM}{\mathcal{M}}
\newcommand{\mcC}{\mathcal{C}}
\newcommand{\mcL}{\mathcal{L}}
\newcommand{\mcD}{\mathcal{D}}
\newcommand{\mcS}{\mathcal{S}}
\newcommand{\scrT}{\mathscr{T}}
\newcommand{\scrM}{\mathscr{M}}
\newcommand{\C}{\mathbb{C}}
\newcommand{\N}{\mathbb{N}}
\newcommand{\ZZ}{\mathbb{Z}}
\newcommand{\isom}{\cong}
\newcommand{\ot}{\otimes}
\newcommand{\by}{\! \times \!}
\newcommand{\ra}{\rightarrow}
\newcommand{\rae}{\!\rightarrow\!}
\newcommand{\tr}{\operatorname{tr}}
\newcommand{\sgn}{\operatorname{sgn}}
\newcommand{\id}{\operatorname{id}}
\newcommand{\Vect}{{\rm Vect}}
\newcommand{\Rel}{{\rm Rel}}
\newcommand{\Set}{{\rm Set}}
\newcommand{\fdVect}{\Vect}
\newcommand{\sPf}{\operatorname{sPf}}
\newcommand{\sPfd}{\operatorname{sPf}^{\vee}}
\newcommand{\Pf}{\operatorname{Pf}}
\newcommand{\sDet}{\operatorname{sDet}}
\newcommand{\Mat}{\operatorname{Mat}}
\newcommand{\unit}{\mathds{1}}
\newcommand{\Ob}{\text{Ob}}
\newcommand{\Mor}{\text{Mor}}
\newcommand{\Hom}{\text{Hom}}
\newcommand{\msP}{\mathscr{P}}
\newcommand{\tn}{\textnormal}
\newcommand{\Tr}{\textnormal{Tr}}
\newcommand{\bra}[1]{\mbox{$\langle #1|$}}
\newcommand{\ket}[1]{\mbox{$|#1\rangle$}}
\newcommand{\ketbra}[2]{\mbox{$|#1\rangle\langle #2|$}}
\newcommand{\indexset}[1]{\mathsf{#1}}
\theoremstyle{plain}
\newtheorem{theorem}{Theorem}[section]
\newtheorem{corollary}[theorem]{Corollary}
\newtheorem{proposition}[theorem]{Proposition}
\newtheorem{lemma}[theorem]{Lemma}
\newtheorem{remark}[theorem]{Remark}
\newtheorem{observation}[theorem]{Observation}
\newtheorem{problem}[theorem]{Problem}
\theoremstyle{definition}
\newtheorem{defn}[theorem]{Definition}
\theoremstyle{definition}
\theoremstyle{definition}
\newtheorem{example}[theorem]{Example}
\theoremstyle{definition}
\newtheorem{algorithm}{Algorithm}
\title[Determinantal Circuits]{Generalized counting constraint satisfaction problems with determinantal circuits}
\author[Morton-Turner]{Jason Morton$^1$ and Jacob Turner$^1$}
\address{$^1$Department of Mathematics, Pennsylvania State University, University Park PA 16802}
\date{June 24, 2014}
\begin{document}

\maketitle
\begin{abstract}
Generalized counting constraint satisfaction problems include Holant problems with planarity restrictions; polynomial-time algorithms for such problems include matchgates and matchcircuits, which are based on Pfaffians.  
In particular, they use gates which are expressible in terms of a vector of sub-Pfaffians of a skew-symmetric matrix. 
We introduce a new type of circuit based instead on determinants, with seemingly different expressive power.  In these {\em determinantal circuits}, a gate is represented by the vector of all minors of an arbitrary matrix.  Determinantal circuits permit a different class of gates.  Applications of these circuits include proofs of theorems from algebraic graph theory including the Chung-Langlands formula for the number of rooted spanning forests of a graph and computing Tutte Polynomials of certain matroids. They also give a strategy for simulating quantum circuits with closed timelike curves.  Monoidal category theory provides a useful language for discussing such counting problems, turning combinatorial restrictions into categorical properties. We  introduce the counting problem in monoidal categories and count-preserving functors as a way to study $\mathsf{FP}$ subclasses of problems in settings which are generally $\mathsf{\#P}$-hard. Using this machinery we show that, surprisingly, determinantal circuits 
can be simulated by Pfaffian circuits at quadratic cost.  
\end{abstract}

\noindent {\small {\bf Keywords}: counting complexity, tensor network, monoidal categories}\\
\noindent {\small {\bf MSC2010}: 15A15, 15A69, 15A24, 18D10, 03D15}

\section {Introduction}

Let $\fdVect_\C$ be the category of finite-dimensional vector spaces and linear transformations over the base field $\mathbb{C}$.  A string diagram ~\cite{joyal1991geometry} in  $\fdVect_\C$ is a {\em tensor (contraction) network}.  Fixing such a diagram, the problem of computing the morphism represented is the {\em tensor contraction problem}, which is in general $\mathsf{\#P}$-hard (examples include weighted counting constraint satisfaction problems ~\cite{cai2012complexity}).  

We study complex-valued tensor contraction problems in subcategories of $\fdVect_\C$ by considering them as diagrams in a monoidal category.  For a survey of the rich diagrammatic languages that can be specified similarly see ~\cite{selinger2009survey} and the references therein.  
By a {\em circuit} we mean a combinatorial counting problem expressed as a string diagram in a monoidal subcategory of  $\fdVect_\C$ (that is, a tensor contraction network).  Such diagrams generalize weighted constraint satisfaction problems and Boolean circuits (such as by requiring planarity), and are often  related to existing description languages.  Subcategories of  $\fdVect_\C$ can faithfully represent Boolean~\cite{lafont2003towards} and quantum circuits~\cite{bergholm2011categorical}, counting constraint satisfaction problems, and many other problems~\cite{Damm03}.  

Suppose we have a problem $\mcL$; a common example are counting constraint satisfaction problems ~\cite{bulatov2010complexity}, perhaps with some restrictions such as planarity.  Such a problem can be described by the data of a monoidal word (see e.g.\ ~\cite{kassel1995quantum}, Chapter 12) and a {\em interpretation}
~\cite{selinger2009survey} map $i\!:\!\mcL \rae \mcC$ that assigns values to primitive terms in the word.  Then determining which morphism is obtained is a tensor contraction problem in some monoidal category $\mcC$. 

From the point of view of complexity theory, we are interested in the class $\mathsf{FP}$ which is comprised of the functions $\{0,1\}^* \ra \N$ computable by a deterministic polynomial-time Turing machine (see e.g.\ \cite[p. 344]{arora2009computational}).
A second functor $\mathbf{h}\!:\!\mcC \ra \mcS$ from a category $\mcC$ in which the contraction problem (Problem \ref{prob:count}) is in $\mathsf{FP}$ and a subcategory $\mcS$ of $\fdVect_\C$ that preserves the solution to the $\mathsf{FP}$ problem serves to characterize the problems which can be solved in polynomial time according to a particular contraction scheme.  

The motivation of this paper comes from holographic algorithms ~\cite{QCtcbSiPT} and our attempts to generalize it and give it a uniform language. This and related schemes work by exploiting some combinatorial identity or {\em kernel} relating an exponential sum (corresponding to performing the tensor 
contraction by a na\"ive algorithm) and a polynomial time operation that yields the same result. They can be viewed as a complementary alternative method to geometric complexity theory ~\cite{mulmuley2001geometric} in the study of which counting problems (such as computing a permanent) may be embedded in a determinant computation at polynomial cost.

We formulate a class of circuits based on determinants and show that the corresponding tensor contraction problem is solvable in polynomial time. The existence of such a class was conjectured in ~\cite{landsberg2012holographic}. A circuit class based on Pfaffians of minors had already been given ~\cite{morton2010pfaffian} and the formula for the number of rooted spanning forests of a graph ~\cite{MR1401006} hinted at the kernel to use for determinantal circuits. Indeed, we can recover the theorem using determinantal circuits.

Another motivation for the construction of determinantal circuits is that generalizations of holographic algorithms are of interest for their potential to find new or improved polynomial time algorithms for $\#\mathsf{P}$ problems. We also discuss applications of determinantal circuits to improve algorithms for computing certain Tutte polynomials that cannot be achieved with Pfaffian circuits.

We then explore the relationship between Pfaffian circuits ~\cite{morton2010pfaffian} (and so matchgates) and determinantal circuits. Since we found the construction of determinantal circuits to be most natural using the language of categories, we first have to recast Pfaffian circuits in the same language. We prove a functorial relationship between them.  We show that, surprisingly, every determinantal circuit can be expressed as a Pfaffian circuit at quadratic cost.

This paper is organized as follows.  In Section \ref{sec:CCC}, we describe the counting problem in monoidal categories and the setting for our results, consolidating the language of combinatorial circuits and generalizations of holographic algorithms into the language of category theory.  In Section \ref{sec:DC} we define determinantal circuits and give applications to the rooted spanning forest theorem (Section \ref{ssec:DCApp}), computing Tutte polynomials, and quantum circuits with postselection-based closed timelike curves (P-CTC, Section \ref{ssec:DCAppCTC}). In Section \ref{sec:PfafvsDet} we reframe Pfaffian circuits in terms of monoidal categories and relate determinantal and Pfaffian circuits. 

A remark on notation: in most cases we use $M,N$ for matrices, $\indexset{I},\indexset{J}$ for sets (especially of indices), $f,g$ for morphisms, $\mathbf{F},\mathbf{G}$ for functors, and $\mcC,\mcM$ for categories.  For a matrix $X$, we let $X_{\indexset{IJ}}$ be the submatrix with rows in $\indexset{I}$ and columns in $\indexset{J}$.

\section{Toward a categorical formulation of counting complexity}\label{sec:CCC}

Let $\mcM$ be a (strict) monoidal category ~\cite{maclane1998categories} with monoidal identity $\unit_\mcM$ and such that $S_\mcM=\Hom_\mcM(\unit_\mcM,\unit_\mcM)$ is a semiring; call this a {\em semiringed category}. 
A {\em monoidal word} is a collection of morphisms composed (when domain and codomain match) and tensored together to form a new morphism ~\cite{kassel1995quantum,morton2014belief}. 

\begin{problem}\label{prob:count} 
The  {\em counting problem}  in a semiringed category $\mcM$ is to determine which morphism in $\Hom_\mcM(\unit_\mcM, \unit_\mcM)$ is represented by an arbitrary monoidal word in $\mcM$ with $\unit_\mcM$ as its domain and codomain.   
\end{problem}

Over $\fdVect_\C$, this is sometimes called the {\em tensor contraction problem}.  By demonstrating a case (counting the number of solutions to a $\mathsf{Mon}-3\mathsf{SAT}$ problem) where Problem \ref{prob:count} is known to be $\mathsf{\#P}$-complete, the following example provides a proof that in general Problem \ref{prob:count} is $\mathsf{\#P}$-hard.

\begin{example}
Consider the tensor scheme $\scrT$ ~\cite{selinger2009survey,joyal1991geometry,morton2014belief} with object variable $B$ and morphism variables $v_n\!:\!\unit \ra B^{\ot n}$ for all integers $n >0$ and $c\!:\!B^{\ot 3} \ra \unit$.  The tensor scheme generates a free symmetric  
monoidal category $\scrM_\scrT$
, and a monoidal word in this category is an abstract description of a counting constraint satisfaction problem, where we haven't yet specified details such as which ternary clause will be used.

Now for any semiring $S$ (such as the Boolean semiring or non-negative integers), we can define the semiringed category $S\Rel$ of ``$S$-valued relations'' in which the objects are finite sets and the morphisms are given by functions from the cartesian product of domain and codomain to $S$, that is $\Hom_{S\Rel}(A,B) = \Hom_{\Set}(A \times B,S)$.  
In particular, $\Hom_{S\Rel}(\unit,\unit)=S$.  
In $S\Rel$, monoidal product of objects is cartesian product and the composition of $f\!:\!A\rae B$ and $g\!:\!B \rae C$ is $(g \circ f)(a,c) = \sum_{b \in B} f(a,b) \cdot g(b,c)$, where addition and multiplication are defined in $S$.  An interpretation in such a category is a functor from  $\scrM_\scrT$ to $S\Rel$ specified by assigning values to each object and morphism variable of the tensor scheme.

Now consider an interpretation in $\N \Rel$ assigning the object the value $B=\{0,1\}$.  Write elements of $B^{\ot n}$ as length-$n$ bitstrings. Assign morphisms values by letting $v_n$ be $1$ on $(\unit,0^{\ot n})$ and $(\unit,1^{\ot n})$ and zero otherwise, expressing Boolean variables.  Let $c$ take value $0$ on $(000,\unit)$ and one otherwise, expressing a ternary OR clause.  Then determining which element of $\N$ is represented by each monoidal word $w\!:\!\unit \rae \unit$ is a $\mathsf{Mon}-\#3\mathsf{SAT}$ problem, and this class of monotone counting 3$\mathsf{SAT}$ problems is $\#\mathsf{P}$-complete.  To remove the monotone restriction, we could add a morphism $n:B \ra B$ to the tensor scheme and an appropriate interpretation.




\end{example}

A strict monoidal functor $\mathbf{F}\!:\!\mcM \rae \mcM'$ between semiringed categories is {\em count preserving} if the induced map $\mathbf{F}\!:\!S_\mcM \rae S_{\mcM'}$ is an injective morphism of semirings.  Schemes that generalize holographic algorithms ~\cite{QCtcbSiPT} seek a count-preserving functor from a category in which the counting problem (Problem \ref{prob:count}) is in $\mathsf{FP}$ to a category in which the counting problem is in general $\mathsf{\#P}$-hard.

In each type of circuit, we consider two semiringed categories $\mcC$ and $\mcS$. Let $\mcL$ be a problem of interest.

 We call $\mcC$ the {\em  counting} category and $\mcS$ is a subcategory of $\fdVect_\C$. Then let $i:\mathcal{L}\to\mcC$ be a map that gives an interpretation or encoding of the problem as a string diagram in $\mcC$. By this we mean that for every instance of a problem $l\in\mathcal{L}$, $i(l)$ is a string diagram that solves this instance of the problem.

The category $\mcC$ may have a non-intuitive encoding of the problem but has the advantage that there exists a polynomial-time algorithm to determine which morphism of $S_\mcM=\Hom(\unit,\unit)$ is represented by an arbitrary monoidal word.  We also have an interpretation $f:\mathcal{L}\to\mcS$. Then we want a monoidal functor $h$ such that  the diagram
\[
\xymatrix{
\mcL \ar[r]^{i} \ar[rd]_{f} & \mcC \ar[d]^{h}\\
&\mcS 
}
\]
 commutes and such that the count is preserved by $h$. $\mcS$ is the subcategory generated by the morphisms in the image of either $h \circ i$ or $f$.  The induced maps on $ S_\mcC$ and $S_\mcS$ make $S_\mcC$ a sub-semiring of $S_\mcS$.  The functor $h$ is called $\sDet$, and $\sPf$ for determinantal and Pfaffian circuits respectively in the sequel.



Of course, it is important that the construction represented by the functors is implementable in polynomial time.  Often this is not a concern, because diagrams in $\mcC$ and $\mcL$ are effectively identified, and the problem is expressed in the language that will be used to perform the contraction.

\section{Determinantal circuits}\label{sec:DC}

Suppose $X$ is an $n \times m$ matrix of elements of $\C$ with rows and columns labeled by finite disjoint subsets $\indexset{N}$ and $\indexset{M}$ of $\N=\ZZ_{\geq 0}$.  For $i \in \N$, let $V_i = \C^2$ be spanned by an orthonormal basis (with inner product)
$v_{i,0}, v_{i,1}$ and for finite $\indexset{N} \subset \N$ write $V_\indexset{N} := \ot_{i \in \indexset{N}} V_i$.  
Define the function $\sDet$ (which we later show to be a functor) by $\sDet(\indexset{N})=V_\indexset{N}$ and
\[
\sDet:  \Mat_k (n,m) \ra V_\indexset{N}^* \ot V_\indexset{M} \isom (\C^{2*})^{\ot n} \ot (\C^2)^{\ot m}
\]
\[
\sDet(X) = \sum_{\indexset{I}\subset [n], \indexset{J} \subset [m]} \det(X_{\indexset{IJ}}) \ketbra{ \indexset{I} }{ \indexset{J} }
\]

\noindent where $\ket{\indexset{I}}=\bigotimes_{i \in \indexset{N}} v_{i,\chi(i,\indexset{I})}$, $\bra{\indexset{J}}=\bigotimes_{i \in \indexset{M}} v^*_{i,\chi(i,\indexset{J})}$ and the indicator function $\chi(i,\indexset{I})=0$ if $i \notin \indexset{I}$ and $1$ if $i \in \indexset{I}$. Throughout this paper, we work with the understanding that $\det(X_{\indexset{IJ}})=0$ if $|\indexset{I}|\ne |\indexset{J}|$.



This subdeterminant function $\sDet$ induces a strong monoidal functor $\sDet:\mcC \ra \fdVect_\C$ from a matrix category to a subcategory $\mcD$ of $\fdVect_\C$. 
Let $\mcC$ be the free monoidal category described as follows.  The objects of $\mcC$ are finite ordered subsets of $\N$ (which may have repeated elements), with monoidal product on objects defined by disjoint union.  The morphisms are $\C$-valued matrices with rows and columns labeled by subsets of $\N$.  If $M, N$ are two matrices with the set of row labels of $M$ equal to the set of column labels of $N$, order them and let $N \circ M = NM$ be the ordinary matrix product, with the resulting matrix inheriting the row labels of $N$ and the column labels of $M$.  The monoidal product $\ot_\mcC$ is the direct sum of labeled matrices.

Let $\mcD$ be the image of $\mcC$ in $\fdVect_\C$. It will be the free dagger symmetric traced monoidal subcategory of finite-dimensional $\C$-vector spaces generated by the object $\C^2$, endowed with an orthonormal basis, and morphisms $\sDet(M)$ for $M$ a labeled matrix.  Tensor product and composition/contraction are the usual operations.

Definitions of free dagger symmetric traced monoidal subcategories and related concepts are given in ~\cite{joyal1996traced,selinger2009survey}, and in the proofs below we check the necessary axioms.

\begin{proposition}\label{thm:sdsmc}
 $\mcC$ is a strict dagger symmetric monoidal category.
\end{proposition}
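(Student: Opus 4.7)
The plan is to check the axioms of a strict dagger symmetric monoidal category in the order: (1) category, (2) strict monoidal, (3) symmetric braiding, (4) dagger, and finally (5) compatibility of dagger with the symmetry. Most of this reduces to standard facts about matrix multiplication, direct sums, transposes, and permutation matrices, so the bulk of the work is bookkeeping with the row/column labels.

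First I would fix the identities and composition. For an ordered labeled set $\indexset{A} \subset \N$, the identity morphism $\id_\indexset{A}$ is the identity matrix with rows and columns both labeled by $\indexset{A}$ in the given order. Associativity of composition and the unit law follow immediately from associativity of matrix multiplication and the fact that multiplying by an identity matrix does nothing; the only subtlety is that, since composability requires the column labels of one matrix to agree \emph{as ordered sets} with the row labels of the next, we can always choose the labels compatibly. Strictness of the monoidal structure is then built in: $\otimes_\mcC$ is union on objects and block-diagonal direct sum on morphisms, and $(A \oplus B) \oplus C = A \oplus (B \oplus C)$ holds on the nose (not up to isomorphism). The empty object/empty matrix is a strict monoidal unit, and the interchange law $(f \otimes g) \circ (f' \otimes g') = (f \circ f') \otimes (g \circ g')$ is the standard block-multiplication identity
\[
\begin{pmatrix} A & 0 \\ 0 & B \end{pmatrix}\begin{pmatrix} A' & 0 \\ 0 & B' \end{pmatrix} = \begin{pmatrix} AA' & 0 \\ 0 & BB' \end{pmatrix}.
\]

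For the symmetry, given ordered labeled sets $\indexset{A}$ and $\indexset{B}$, define $\sigma_{\indexset{A},\indexset{B}}\colon \indexset{A} \cup \indexset{B} \to \indexset{B} \cup \indexset{A}$ to be the permutation matrix that reorders the rows accordingly (its entries are $0$ or $1$). I would check the two hexagon axioms and $\sigma_{\indexset{B},\indexset{A}} \circ \sigma_{\indexset{A},\indexset{B}} = \id$ by direct computation on basis vectors, and verify naturality $\sigma_{\indexset{B},\indexset{D}} \circ (f \oplus g) = (g \oplus f) \circ \sigma_{\indexset{A},\indexset{C}}$ for $f\colon \indexset{A} \to \indexset{B}$, $g\colon \indexset{C} \to \indexset{D}$, which is again a standard block-permutation identity.

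For the dagger, I would take $f^\dagger := f^T$ (transpose of the matrix, with row and column labels swapped). Since transposition is an involution that reverses products, $(gf)^T = f^T g^T$, this gives a contravariant involutive functor that is the identity on objects. Compatibility with the tensor product $(f \oplus g)^T = f^T \oplus g^T$ is immediate, and compatibility with the symmetry $\sigma_{\indexset{A},\indexset{B}}^\dagger = \sigma_{\indexset{B},\indexset{A}}$ holds because permutation matrices are orthogonal, so their transpose equals their inverse.

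The main obstacle (such as it is) is bookkeeping around the ordering of labeled sets: strictly speaking, composition requires the codomain ordering of one morphism to match the domain ordering of the next, so I would be careful to phrase $\mcC$ so that the objects really are \emph{ordered} finite subsets (with fixed orderings built into the data), and to define $\oplus$ so that the elements of $\indexset{A}$ come before those of $\indexset{B}$ in $\indexset{A} \cup \indexset{B}$. With that convention, none of the coherence data needs to be invoked beyond identities and permutation matrices, and the verification reduces to the elementary matrix-algebra identities listed above.
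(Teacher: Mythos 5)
Your proposal is correct and follows essentially the same route as the paper's proof: direct verification of the axioms with the direct sum as the monoidal product, block permutation matrices as the symmetry, and transpose as the dagger, with all coherence isomorphisms being equalities. If anything you are slightly more thorough than the paper, since you also note naturality of the braiding and the unitarity $\sigma_{\indexset{A},\indexset{B}}^\dagger = \sigma_{\indexset{A},\indexset{B}}^{-1}$ explicitly, whereas the paper only checks the hexagons and the unitarity of $\alpha,\lambda,\rho$.
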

\begin{proof}

In this proof we denote the monoidal product $\otimes_\mcC$ for $\mcC$ defined above simply as $\otimes$. We need to show that it is a bifunctor. For $A\subset\mathbb{N}$, $\id_A$ is the identity matrix with row and column labels $A$. It is easy to see that for any $A,B\subset\mathbb{N}$, $\id_A\otimes\id_B=\id_{A\otimes B}$. Now for morphisms $W,X,Y,Z\in\Mor(\mcC)$, $W\otimes X\circ Y\otimes Z=(W\oplus X)(Y\oplus Z)=WY\oplus XZ=(W\circ Y)\otimes(X\circ Z)$, so $\otimes$ is indeed a bifunctor.

 For $A,B,C\in\Ob(\mcC)$, the associator $\alpha_{ABC}:(A\otimes B)\otimes C\to A\otimes(B\otimes C)$ is just equality by the associativity of matrix direct product. The unit for $\mcC$, denoted $\unit$, is the empty set. Then $\lambda_A:\unit\otimes A\to A$ and $\rho_A:A\otimes\unit\to A$ are also equality since it is union with $\emptyset$. It is clear that $\alpha,\lambda,$ and $\rho$ are natural isomorphisms. 

We need to check that the diagrams from MacLane's Coherence Theorem commute. First let us check, for $A,B\in\text{Ob}(\mcC)$:
\[
\xymatrix{
 (A\otimes \unit)\otimes B \ar[r]^\alpha \ar[rd]_{\rho_A\otimes\id_B} &A\otimes(\unit\otimes B) \ar[d]^{\id_A\otimes\lambda_B}\\
&A\otimes B
}
\]
$(A\otimes \unit)\otimes B=(A\cup\emptyset)\cup B$ is mapped to $A\cup B$ by $\rho_A\otimes\id_B$ via equality. Then $\alpha$ maps $(A\cup \emptyset)\cup B$ to $A\cup(\emptyset\cup B)$ via equality. This is then mapped to $A\cup B$ by $\id_A\otimes\lambda_B$ via equality, and the diagram commutes.

Now let us check the second diagram, for $A,B,C,D\in\Ob(\mcC)$:
\[
 \xymatrix{
& ((C\otimes A)\otimes B)\otimes D \ar[ld]^\alpha \ar[r]_{\alpha\otimes\id_D} & (C\otimes(A\otimes B))\otimes D \ar[dd]_\alpha\\
(C\otimes A)\otimes(B\otimes D) \ar[rd]_\alpha & &\\
& C\otimes(A\otimes(B\otimes D))  &C\otimes((A\otimes B)\otimes D) \ar[l]^{\id_C\otimes\alpha}}.
\]
The object $((C\otimes A)\otimes B)\otimes D)=((C\cup A)\cup B)\cup D$ is mapped to $C\cup(A\cup(B\cup D))$ by $(\id_C\otimes\alpha)\circ(\alpha)\circ(\alpha\otimes\id_D)$ via equality. Similarly, it is mapped to $C\cup(A\cup(B\cup D))$ by $\alpha\circ\alpha$ via equality. This diagram also commutes and so $\mcC$ is a monoidal category. Furthermore, since $\alpha,\lambda$, and $\rho$ are equalities, $\mcC$ is a strict monoidal category.

The braiding for $\mcC$ is a map $c_{A,B}:A\otimes B\to B\otimes A$, $A,B\in\Ob(\mcC)$. It is given by the matrix 
\begin{equation*}
c_{A,B}=\bordermatrix{ & B & A\cr
A &0&1\cr
B &1&0}.
\end{equation*}
We need to check that the following diagrams commute for $A,B,C\in\Ob(\mcC)$:
\[
 \xymatrix{
&(B\otimes A)\otimes C \ar[r]^\alpha & B\otimes(A\otimes C) \ar[rd]^{\id_B\otimes c_{A,C}}&\\
(A\otimes B)\otimes C \ar[ru]^{c_{A,B}\otimes \id_C} \ar[rd]_\alpha & & &B\otimes(C\otimes A)\\
& A\otimes(B\otimes C) \ar[r]^{c_{A,(B\otimes C)}} & (B\otimes C)\otimes A \ar[ru]_{\alpha} &}
\]
\[
 \xymatrix{
&(B\otimes A)\otimes C \ar[r]^\alpha & B\otimes(A\otimes C) \ar[rd]^{\id_B\otimes c^{-1}_{A,C}}&\\
(A\otimes B)\otimes C \ar[ru]^{c^{-1}_{A,B}\otimes \id_C} \ar[rd]_\alpha & & &B\otimes(C\otimes A)\\
& A\otimes(B\otimes C) \ar[r]^{c^{-1}_{A,(B\otimes C)}} & (B\otimes C)\otimes A \ar[ru]_{\alpha} &}.
\] The first diagram commutes by noting that 
\begin{equation*}
\bordermatrix{ & B & A & C\cr
A &0&1&0\cr
B &1&0&0\cr
C &0&0&1}
\bordermatrix{ &B&A&C\cr
B&1&0&0\cr
A&0&1&0\cr
C&0&0&1}
\bordermatrix{ &B&C&A\cr
B&1&0&0\cr
A&0&0&1\cr
C&0&1&0}=
\end{equation*}

\begin{equation*}
\bordermatrix{ &A&B&C\cr
A&1&0&0\cr
B&0&1&0\cr
C&0&0&1}
\bordermatrix{&B&C&A\cr
A&0&0&1\cr
B&1&0&0\cr
C&0&1&0}
\bordermatrix{&B&C&A\cr
B&1&0&0\cr
C&0&1&0\cr
A&0&0&1}.
\end{equation*} The second diagram commutes since $c^{-1}_{B,A}=c_{A,B}$ (which implies the category is symmetric) for any $A,B\in\Ob(\mcC)$ and so the second diagram is the same as the first.

The dagger for $\mcC$ is given by matrix transpose and the identity on objects. 
Clearly $\id_A^\dag=\id_A^T=\id_A$. Given $X,Y\in\Mor(\mcC)$, $X:A\to B$, $Y:B\to C$, $(X\circ Y)^\dag=(XY)^T=Y^TX^T=Y^\dag\circ X^\dag:C\to A$. Lastly $X^{\dag\dag}=X^{TT}=X$. 

We also need the dagger to satisfy two extra properties since we are working in a monoidal category. First, given $X,Y\in\Mor(\mcC)$, $(X\otimes Y)^\dag=(X\oplus Y)^T=X^T\oplus Y^T=X^\dag\otimes Y^\dag$. Secondly, $\alpha,\lambda,$ and $\rho$ should all be unitary (its inverse is equal to its dagger). Since they are all the identity morphism, this is also satisfied. Thus $\mcC$ is indeed a strict dagger symmetric monoidal category.
\end{proof}

\begin{theorem}\label{thm:sdetequiv}
The map $\sDet$ defines a strict monoidal functor which is an equivalence (in fact, an isomorphism) of dagger symmetric traced categories.  Thus while computing a trace in $\fdVect_\C$ is in general $\#P$-hard, in the image of $\sDet$ it can be computed in polynomial time.
\end{theorem}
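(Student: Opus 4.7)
My plan is to verify each structural property in turn, leveraging the fact that $\mcD$ is \emph{defined} as the image of $\mcC$ so that surjectivity is automatic and the essential content is checking that $\sDet$ respects the algebraic structure of $\mcC$.

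First I would check that $\sDet$ is a functor. Preservation of identities is immediate: for $\id_A$, the minor $\det((\id_A)_{\indexset{IJ}})$ vanishes unless $\indexset{I}=\indexset{J}$, in which case it equals $1$, so $\sDet(\id_A)$ is the identity tensor on $V_\indexset{A}$. The heart of the argument is that $\sDet$ preserves composition: for $M:\indexset{A}\to\indexset{B}$ and $N:\indexset{B}\to\indexset{C}$, the tensor obtained by contracting $\sDet(N)$ and $\sDet(M)$ along $V_\indexset{B}$ has $(\indexset{I},\indexset{J})$-entry equal to $\sum_{\indexset{K}\subset\indexset{B}}\det(N_{\indexset{IK}})\det(M_{\indexset{KJ}})$, which by the Cauchy--Binet formula equals $\det((NM)_{\indexset{IJ}})$. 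This single identity is what makes everything else work.

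Next, for strict monoidality I would show $\sDet(M\oplus N)=\sDet(M)\otimes\sDet(N)$. The key observation is that since the row and column label sets of $M$ and $N$ are disjoint, every nonzero minor of the block-diagonal matrix $M\oplus N$ factors uniquely as a product of a minor of $M$ and a minor of $N$; consequently the basis expansions on the two sides agree term-by-term. Preservation of the monoidal unit is trivial since both sides give the scalar $1$.

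For the dagger, symmetry, and trace: $\sDet(M^T)$ has entries $\det((M^T)_{\indexset{IJ}})=\det(M_{\indexset{JI}})$, so $\sDet(M^T)=\sDet(M)^\dag$ under the orthonormal basis. The braiding $c_{\indexset{A},\indexset{B}}$ is a permutation matrix whose only nonzero minors are the ones that match each row label to its image column label, yielding exactly the swap tensor on $V_\indexset{A}\otimes V_\indexset{B}$; the hexagon axioms then follow from those already verified in Proposition \ref{thm:sdsmc} and the analogous axioms in $\fdVect_\C$. For the trace, the definition $\tr(f)=\det(I+f)$ in $\mcC$ expands as $\sum_{\indexset{I}\subset[|f|]}\det(f_{\indexset{II}})$, and this is precisely the sum of the diagonal entries of $\sDet(f)$, i.e.\ the categorical trace in $\fdVect_\C$ applied to $\sDet(f)$. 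To finish the equivalence claim, I would note bijectivity on objects is clear and injectivity on morphisms is immediate because the $1\times 1$ minors in $\sDet(M)$ recover every entry of $M$; surjectivity onto $\mcD$ holds by construction.

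The main obstacle is really the composition step, since Cauchy--Binet is what forces the whole categorical picture to cohere; the rest are bookkeeping verifications. Finally, the complexity claim is immediate from the trace identity: for $f:\indexset{A}\to\indexset{A}$, computing $\tr(\sDet(f))=\det(I+f)$ is a single determinant and therefore lies in $\mathsf{FP}$, whereas the analogous trace in $\fdVect_\C$ applied to a generic tensor network is $\#P$-hard.
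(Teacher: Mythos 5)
Your proposal follows essentially the same route as the paper's: Cauchy--Binet for preservation of composition, the factorization of minors of $M\oplus N$ for strict monoidality, transpose for the dagger, the identity $\det(I+M)=\sum_{\indexset{J}}\det(M_{\indexset{J}})$ (Proposition~\ref{prop:detkernel}) for compatibility of the two traces, and fullness/faithfulness from the definition of $\mcD$ together with the $1\times 1$ minors. One step is wrong as stated, although it does not sink the theorem: $\sDet(c_{\indexset{A},\indexset{B}})$ is \emph{not} the swap tensor of $\fdVect_\C$. The braiding matrix $\left(\begin{smallmatrix}0&1\\1&0\end{smallmatrix}\right)$ has empty minor $1$, two nonzero $1\times 1$ minors equal to $1$, and full $2\times 2$ minor equal to $\det\left(\begin{smallmatrix}0&1\\1&0\end{smallmatrix}\right)=-1$, so $\sDet(c_{\indexset{A},\indexset{B}})=\ketbra{00}{00}+\ketbra{01}{10}+\ketbra{10}{01}-\ketbra{11}{11}$, a ``fermionic'' swap; the paper flags exactly this in the remark following Lemma~\ref{lem:stdetDagSymEq}, and it matters because the count computed with this braiding differs from the one computed with $u\otimes v\mapsto v\otimes u$. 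Accordingly, the hexagon axioms for $\mcD$ cannot be inherited from ``the analogous axioms in $\fdVect_\C$'' as you suggest; they hold because the braiding on $\mcD$ is \emph{defined} as the image under $\sDet$ of the braiding on $\mcC$, whose axioms were verified in Proposition~\ref{thm:sdsmc}, so compatibility of $\sDet$ with the braiding is tautological rather than a computation in $\fdVect_\C$. A smaller point: your assertion that bijectivity on objects ``is clear'' is the one place the paper is more cautious --- the proof of Lemma~\ref{lem:stdetStrMonEq} concludes that $\sDet$ is an equivalence but not a bijection on objects --- so you should either justify that claim for the specific object sets involved or settle for an equivalence, which is all the rest of the argument needs.
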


We prove this in two parts as Lemmata \ref{lem:stdetStrMonEq} and \ref{lem:stdetDagSymEq}.

\begin{lemma}\label{lem:stdetStrMonEq}
The map $\sDet$ defines a strict monoidal functor which is an equivalence of monoidal categories.
\end{lemma}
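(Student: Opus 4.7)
The plan is to verify that $\sDet$ is a strict monoidal functor by checking preservation of identities, composition, unit, and tensor product on the nose, and then to argue it is an equivalence by observing that $\mcD$ is defined as the image of $\sDet$ (giving essential surjectivity and fullness) and that $\sDet$ is faithful because matrix entries are recovered from $1\times 1$ minors.

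First I would establish functoriality. For identities, the identity matrix $I_\indexset{N}$ in $\mcC$ has only principal minors nonzero, each equal to $1$, so $\sDet(I_\indexset{N}) = \sum_{\indexset{I}\subset\indexset{N}} \ketbra{\indexset{I}}{\indexset{I}} = \id_{V_\indexset{N}}$. For composition of $M\colon\indexset{A}\to\indexset{B}$ and $N\colon\indexset{B}\to\indexset{C}$, the central tool is the Cauchy--Binet formula $\det((NM)_{\indexset{IJ}}) = \sum_{\indexset{K}\subset\indexset{B}} \det(N_{\indexset{IK}})\det(M_{\indexset{KJ}})$, summed over $\indexset{K}$ of matching cardinality (other cardinalities give vanishing minors of non-square blocks). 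Substituting this into the defining expansion of $\sDet(NM)$ and using orthonormality $\braket{\indexset{K}}{\indexset{K}'}=\delta_{\indexset{K},\indexset{K}'}$ in the computation of $\sDet(N)\circ\sDet(M)$ yields equality.

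Next I would verify strict monoidality. The unit $\emptyset$ of $\mcC$ has a unique empty minor equal to $1$, so $\sDet(\emptyset)=1\in\C=\unit_{\fdVect_\C}$. For the tensor product, observe that a submatrix of the block-diagonal $M\oplus N$ is itself block-diagonal with diagonal blocks $M_{\indexset{I}_M\indexset{J}_M}$ and $N_{\indexset{I}_N\indexset{J}_N}$, so its determinant vanishes unless both blocks are square, in which case it factors as a product. Re-indexing the sum over $\indexset{I},\indexset{J}$ by the partitions $\indexset{I}=\indexset{I}_M\sqcup\indexset{I}_N$ and $\indexset{J}=\indexset{J}_M\sqcup\indexset{J}_N$, together with the natural factorization $\ket{\indexset{I}}=\ket{\indexset{I}_M}\otimes\ket{\indexset{I}_N}$ (and likewise for bras), then yields $\sDet(M\oplus N)=\sDet(M)\otimes\sDet(N)$. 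Strict preservation of associators and unitors is automatic since these are identities in $\mcC$ (Proposition \ref{thm:sdsmc}) and in $\mcD$.

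Finally, for equivalence: $\mcD$ is defined as the image of $\sDet$, so $\sDet$ is surjective on objects (hence essentially surjective); combined with the preservation of composition and tensor established above, the image is closed under these operations and so $\sDet$ is full. Faithfulness follows from noting that the $1\times 1$ minors of $M$ are precisely its entries, appearing as the coefficients of $\ketbra{\{i\}}{\{j\}}$ in $\sDet(M)$, so $\sDet(M)$ determines $M$ from the label data. I expect the composition step to be the main obstacle, since the Cauchy--Binet expansion must be matched carefully against the tensor contraction along $V_\indexset{B}$, with attention to the chosen orderings on the label sets so that the signs implicit in the minors agree with those arising from the contraction.
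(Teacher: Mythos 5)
Your proposal is correct and follows essentially the same route as the paper: Cauchy--Binet for composition, the block-diagonal factorization of minors of $M\oplus N$ for the tensor product, strictness from the associators and unitors being identities, and faithfulness from recovering matrix entries as $1\times 1$ minors. The only cosmetic difference is that the paper writes out the monoidal-functor coherence diagrams explicitly before observing they commute trivially, which you correctly note is automatic.
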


\begin{proof}
First we must show that $\sDet$ is a functor, i.e.\ that it respects composition and that $\sDet(\id_A)=\id_{\sDet(A)}$. 
Suppose $X \in \Hom_\mcC(\indexset{I},\indexset{J})$, $Y \in \Hom_\mcC(\indexset{J},\indexset{K})$ so $X$ is a matrix with row labels $\indexset{I}$, column labels $\indexset{J}$ and $Y$ has row labels $\indexset{J}$ and column labels $\indexset{K}$: 
\[
\sDet(Y) \circ \sDet(X)=\sum_{\indexset{i}\subseteq \indexset{I}}{\sum_{\indexset{j}\subseteq \indexset{J}}{\sum_{\indexset{k}\subseteq \indexset{K}}{\det(X_{\indexset{ij}})\det(Y_{\indexset{jk}})|\indexset{i}\rangle\langle \indexset{k}|}}}\]\[
=\sum_{\indexset{i}\subseteq \indexset{I}}{\sum_{\indexset{j}\subseteq \indexset{J}}{\det(XY_{\indexset{ik}})|\indexset{i}\rangle\langle \indexset{k}|}}=\sDet(XY)\] 
where the middle equality is the Cauchy-Binet formula. Now in $\mcC$, $\id_A$ is the identity matrix with row and column labels $A$. Then $\sDet(\id_A)=\sum_{I\subseteq A}{|I\rangle\langle I|}$ which is the identity morphism for the object $\sDet(A)$ in $\mcD$, and $\sDet$ is indeed a functor.

For $\sDet$ to be a monoidal functor, we must demonstrate two additional properties.  First we must show that $\sDet(A\oplus B)=\sDet(A)\ot\sDet(B)$. Let $\indexset{I}$ and $\indexset{J}$ be the rows and columns of  $A$, respectively. Let $\indexset{I}'$ and $\indexset{J}'$ be likewise for $B$. A straightforward calculation gives 
$$\sDet(A\oplus B)=\sum_{\indexset{U}\subseteq \indexset{I}\cup \indexset{I}'}{\sum_{\indexset{V}\subseteq \indexset{J}\cup \indexset{J}'}{\det(A\oplus B)_{\indexset{UV}}|\indexset{U}\rangle\langle \indexset{V}|}}$$ 
$$=\sum_{\indexset{U}\subseteq \indexset{I}\cup \indexset{I}'}{\sum_{\indexset{V}\subseteq \indexset{J}\cup \indexset{J}'}{\det(A_{\indexset{U}\cap \indexset{I},\indexset{V}\cap \indexset{J}})\det(B_{\indexset{V}\cap \indexset{I}',\indexset{V}\cap \indexset{J}'})|\indexset{U}\cap \indexset{I}\rangle|\indexset{U}\cap \indexset{I}'\rangle\langle \indexset{V}\cap \indexset{J}|\langle \indexset{V}\cap \indexset{J}'|}}$$ 
$$=\sum_{\indexset{U}\subseteq \indexset{I}}{\sum_{\indexset{U}'\subseteq \indexset{I}'}{\sum_{\indexset{V}\subseteq \indexset{J}}{\sum_{\indexset{V}'\subseteq \indexset{J}'}{\det(A_{\indexset{UV}})\det(B_{\indexset{U}'\indexset{V}'}}}}})|\indexset{U}\rangle|\indexset{U}'\rangle\langle \indexset{V}|\langle \indexset{V}'|=\sDet(A)\ot\sDet(B).$$

Secondly we must show there are morphisms $f_0:\unit_\mcD \to \sDet(\unit_\mcD)$ (the unit in $\mcD$ is the base field $\C$) and for any $A,B\in\Ob(\mcC)$, $f_1:\sDet(A)\otimes \sDet(B)\to \sDet(A\otimes_\mcC B)$ satisfying certain axioms expressed as commutative diagrams.  

Since $\sDet(\emptyset)=\otimes_{i\in\emptyset}{V_i}=\C$, $f_0$ is simply equality. Similarly for objects $A$ and $B$, $$\sDet(A\otimes_\mcC B)=\sDet(A\cup B)=\otimes_{i\in A\cup B}{V_i}$$ $$=(\otimes_{i\in A}{V_i})\otimes(\otimes_{j\in B}{V_j})=\sDet(A)\otimes\sDet(B).$$ Thus $f_1$ is equality. In the following diagrams, we shall call $\sDet$ simply $\mathbf{F}$.

Let $\alpha',\lambda',\rho'$ be the natural transformations for $\mcD$. Note that all three are equalities. For $A,B,C\in\Ob(\mcC)$, the following must commute:
\[
 \xymatrix{
\mathbf{F}(A)\otimes(\mathbf{F}(B)\otimes \mathbf{F}(C)) \ar[r]^{\alpha'} \ar[d]_{\id_{\mathbf{F}(A)}\otimes f_1} & (\mathbf{F}(A)\otimes \mathbf{F}(B))\otimes \mathbf{F}(C) \ar[d]_{f_1\otimes\id_{\mathbf{F}(C)}}\\
\mathbf{F}(A)\otimes(\mathbf{F}(B\otimes_\mcC C)) \ar[d]_{f_1} & (\mathbf{F}(A\otimes_\mcC B)\otimes \mathbf{F}(C)) \ar[d]_{f_1}\\
\mathbf{F}(A\otimes_\mcC (B\otimes_\mcC C)) \ar[r]^{\mathbf{F}(\alpha)} & \mathbf{F}((A\otimes_\mcC B)\otimes_\mcC C)}.
\]

\[
 \xymatrix{
\mathbf{F}(B)\otimes\unit' \ar[r]^{\rho'} \ar[d]^{\id_{\mathbf{F}(B)}\otimes f_0} & \mathbf{F}(B)\\
\mathbf{F}(B)\otimes \mathbf{F}(\unit) \ar[r]^{f_1} & \mathbf{F}(B\otimes\unit) \ar[u]^{\mathbf{F}(\rho)}}\qquad
 \xymatrix{
\unit'\otimes \mathbf{F}(B) \ar[r]^{\lambda} \ar[d]_{f_0\otimes\id_{\mathbf{F}(B)}} & \mathbf{F}(B)\\
\mathbf{F}(\unit)\otimes \mathbf{F}(B) \ar[r]^{f_1} & \mathbf{F}(\unit\otimes B) \ar[u]_{\mathbf{F}(\lambda)}}
\]
The diagrams trivially commute as all of the maps are identities.
So $\sDet$ is a strong monoidal functor. Since $f_0,f_1$ are equalities, it is a strict monoidal functor.

Lastly, we want to say that $\mcC$ and $\mcD$ are equivalent as monoidal categories. By definition of $\mcD$, $\sDet$ surjects onto objects and morphisms, so it is a full functor. Now consider $\Hom(A,B)$ for objects $A,B\in\Ob(\mcC)$. Let $X\in\Hom(A,B)$. $\sDet(X)$ contains all the entries of $X$ as coefficients in the sum since the entries of $X$ are $1\times 1$ minors, and  $X$ is determined by its image $\sDet(X)$. Thus $\sDet$ induces an injection on $\Hom(A,B)\to\Hom(\sDet(A),\sDet(B))$, and the functor is faithful. Thus it is an equivalence. However, it is not quite an isomorphism as $\sDet$ does not give a bijection on objects as all subsets of $\N$ of size $n$ map to $(\C^2)^{\ot n}$.
\end{proof}

We have yet to define the braiding and dagger for $\mcD$ required to state Theorem \ref{thm:sdetequiv}. For $\mathbf{F}=\sDet$ to respect the braiding,  we need the following diagram to commute:
\[
\xymatrix{
 \mathbf{F}(A)\otimes \mathbf{F}(B) \ar[r]^{f_1} \ar[d]_{c_{\mathbf{F}(A),\mathbf{F}(B)}} & \mathbf{F}(A\ot B) \ar[d]^{\mathbf{F}(c_{A,B})}\\
\mathbf{F}(B)\otimes \mathbf{F}(A) \ar[r]^{f_1} & \mathbf{F}(B\ot_\mcC A)}.
\]   Recalling the matrix $c_{A,B}$ as defined in Theorem \ref{thm:sdsmc}, we define the braiding for $\mcD$ to be $F(c_{A,B})=\sDet(c_{A,B})=|00\rangle\langle 00|+|01\rangle\langle10|+|10\rangle\langle01|-|11\rangle\langle11|$, which makes the diagram commute trivially. We do not check the diagrams that ensures this is a valid braiding for $\mcD$ since it is equivalent to $\mcC$. For the dagger, consider $X\in\Mor(\mcC)$ with row labels $\mathsf{I}$ and column labels $\mathsf{J}$, and note 
$$\sDet(X^\dag)=\sum_{\mathsf{i}\subseteq \mathsf{I},\mathsf{j}\subseteq \mathsf{J}}{\det(X_{\mathsf{i}\mathsf{j}}^T)|\mathsf{i}\rangle\langle \mathsf{j}|}=$$
$$\sum_{\mathsf{i}\subseteq \mathsf{I},\mathsf{j}\subseteq \mathsf{J}}{\det(X_{\mathsf{ij}})|\mathsf{j}\rangle\langle \mathsf{i}|}=\sDet(X)^T.$$ So the dagger for $\mcD$ is the normal dagger in $\fdVect_k$.

For $f:A \ra A\in \mcC$, define $\tr(f) = \det( I + f)$ and define trace in $\mcD$ in the usual way. This choice of trace may seem unusual, but it satisfies the axioms of a traced category ~\cite{selinger2009survey} and its image under the $\sDet$ functor is the usual trace in $\mcD$ (as we show in a moment). This is the most important aspect as it allows us to frame problems in $\mcC$ and find the answer to the contraction problem without the need to pass over to the category $\mcD$ which has exponentially larger tensors.

\begin{lemma}\label{lem:stdetDagSymEq}
The map $\sDet$ defines a strict monoidal functor which is an equivalence of dagger symmetric traced categories.
\end{lemma}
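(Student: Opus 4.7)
The plan is to build on Lemma~\ref{lem:stdetStrMonEq}, together with the structure maps on $\mcD$ introduced in the paragraph preceding this lemma, and reduce everything to verifying trace compatibility. Preservation of the braiding and dagger has essentially been recorded: the braiding $\sDet(c_{A,B})$ on $\mcD$ is defined precisely so that the naturality square commutes on the nose, and the calculation $\sDet(X^\dag) = \sDet(X)^T$ shows the dagger on $\mcD$ (inherited from $\fdVect_\C$) is exactly the image of transpose on $\mcC$. So what remains is to check (i) that $\mcC$ equipped with $\tr(f) = \det(I+f)$ is genuinely a traced monoidal category, and (ii) that $\sDet$ intertwines this trace with the standard categorical trace in $\mcD \subset \fdVect_\C$.

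The heart of the argument is the identity
\[
\tr_{\fdVect_\C}\!\bigl(\sDet(M)\bigr) \;=\; \sum_{\indexset{I}\subseteq A} \det(M_{\indexset{I}}) \;=\; \det(I+M),
\]
valid for every endomorphism $M:A\to A$ in $\mcC$. The first equality is immediate from the definition of $\sDet$, since the categorical trace of $\sum_{\indexset{I},\indexset{J}} \det(M_{\indexset{IJ}}) \ketbra{\indexset{I}}{\indexset{J}}$ retains only the diagonal terms $\indexset{I}=\indexset{J}$. The second is the classical Laplace-type expansion of $\det(I+M)$ by multilinearity in the rows, which produces one contribution $\det(M_{\indexset{I}})$ per choice of the subset of rows on which one takes the $M$-part rather than the $I$-part. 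This settles the scalar trace and shows that the definition $\tr(f)=\det(I+f)$ on $\mcC$ agrees, under $\sDet$, with the standard trace on $\mcD$.

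To lift this to partial traces $\tr^X : \Hom(A\otimes X, B\otimes X) \to \Hom(A,B)$, I exploit the fact that $\mcD$ sits inside the dagger compact category $\fdVect_\C$: the cups and caps on $V_X$ are themselves $\sDet$-images of suitably labeled identity matrices, so the partial trace in $\mcD$ coincides with composition against these cups and caps. Translating back through $\sDet$ via Cauchy--Binet (exactly as in the proof of Lemma~\ref{lem:stdetStrMonEq}) produces an explicit block-minor formula for $\tr^X$ on $\mcC$; taking that formula as the definition of partial trace in $\mcC$ makes $\sDet\circ\tr^X = \tr^{\sDet X}\circ\sDet$ automatic, and specializes to $\det(I+f)$ when $A=B=\unit$. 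The traced monoidal axioms of Joyal--Street--Verity---naturality in the inputs, dinaturality in the traced object, vanishing, superposing, and yanking---then transport from $\fdVect_\C$ to $\mcC$ because $\sDet$ is a full and faithful strict monoidal equivalence that preserves composition, tensor, braiding, dagger, and trace.

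The main obstacle I anticipate is not the scalar identity, which is classical, but the block-matrix unrolling of the compact-closed expression $(\id_B\otimes\epsilon_X)\circ(f\otimes\id_{X^*})\circ(\id_A\otimes\eta_X)$ back through $\sDet$: one must verify that the resulting signed sum of sub-determinants reassembles into a clean determinantal expression on the $\mcC$ side, so that the ``partial $\det(I+\cdot)$'' definition is genuinely well posed and independent of how the object $X$ is decomposed as a union. This is bookkeeping rather than a conceptual difficulty, and is handled by the same Cauchy--Binet calculation already used in Lemma~\ref{lem:stdetStrMonEq} for composition.
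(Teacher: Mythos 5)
Your core argument --- braiding respected by construction, dagger respected via $\sDet(X^\dag)=\sDet(X)^T$, and the scalar trace identity $\tr(\sDet(M))=\sum_{\indexset{I}}\det(M_{\indexset{I}})=\det(I+M)$ proved by multilinearity in the rows --- is exactly the content of the paper's proof, which defers the trace claim to Theorem~\ref{thm:detevaltime} and Proposition~\ref{prop:detkernel}. That part is correct and is all the paper itself verifies.

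Where you go further, in the partial-trace paragraph, there is a genuine error. You assert that ``the cups and caps on $V_X$ are themselves $\sDet$-images of suitably labeled identity matrices.'' They are not, and the paper proves this explicitly just after this lemma: the evaluation $e_A:A\ot A^*\ra\unit$ would have to be $\sDet$ of a matrix with column labels $A\cup A^*$ and no rows, but the unique such $2\times 0$ matrix has $\sDet$ equal to $\ket{00}$ rather than $\ket{00}+\ket{11}$; this is precisely why $\mcC$ fails to have duals and why one is forced into the traced rather than compact-closed setting. The point is that $\sDet$ is tied to the row/column partition of a matrix: $\sDet(\id_A)$ is the identity morphism $V_A\ra V_A$, and although its underlying tensor coincides with that of the cup, there is no object of $\mcC$ and no matrix whose $\sDet$-image is the cup or cap \emph{with the correct domain and codomain}. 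Consequently your proposed reduction of $\tr^X$ to composition against $\sDet$-images collapses. One can still define the partial trace on $\mcD$ using the ambient cups and caps of $\fdVect_\C$, but then one must separately prove that the result of tracing out $X$ from $\sDet(M)$ is again in the image of $\sDet$ --- i.e.\ that $\sum_{\indexset{K}\subseteq X}\det(M_{\indexset{I}\cup\indexset{K},\,\indexset{J}\cup\indexset{K}})$ equals $\det(N_{\indexset{IJ}})$ for a single matrix $N$ --- and this is exactly the ``block-minor formula'' you defer as bookkeeping. It is not bookkeeping: it is the substantive closure statement on which the whole partial-trace argument rests, and it does not follow from Cauchy--Binet alone. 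Since the paper only ever uses (and only ever verifies) the total trace on endomorphisms, your proof is acceptable for the lemma as the authors intend it once the false cup/cap claim and the unproved closure step are excised; as a verification of the full Joyal--Street--Verity traced structure it remains incomplete.
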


\begin{proof}
By construction, $\sDet$ respects the braiding.  We also showed that this functor respects the normal dagger for linear transformations. Theorem \ref{thm:detevaltime} and Proposition \ref{prop:detkernel} below shows that $\sDet$ induces the identity map from $\Hom(\unit_\mcC,\unit_\mcC)\to\Hom(\unit_\mcD,\unit_\mcD)$ and thus respects the trace.

\end{proof}

\begin{remark}
This braiding is {\em not} the usual braiding for $\Vect_\C$.  Thus while the functor $\sDet$ is count-preserving, the count will not be the same as if the standard braiding $u \otimes v \mapsto v \otimes u$ is used. 
\end{remark}


Using the operations of $\oplus$ and matrix multiplication, we can transform any string diagram in $\mcC$ into a diagram with a single matrix, $M$, and thus evaluate the determinantal circuit efficiently. 

A determinantal circuit is the trace of a linear map defined by an expression of the form $(f_{1,1} \otimes \cdots \otimes f_{1,n_1}) \circ \cdots \circ (f_{m,1} \otimes \cdots \otimes f_{m,n_m})$. Let $d_k$ be the dimension of the domain of the $k$th linear map $(f_{k,1} \otimes \cdots \otimes f_{k,n_k})$, with $k=1,\dots,m$. The maximum width of such a circuit is $\max_{k=1,\dots,m}\log_2d_k$ and the depth is $m$.

\begin{theorem}\label{thm:detevaltime}
The time complexity of computing the trace of a determinantal circuit in $\mcC$ is $O(dw^\omega) = O(dw^\omega + c^\omega)$ where $d$ is the depth of the circuit, $w$ is the maximum width, $c$ is width at the input and output (so can be chosen to be the minimum width), and $\omega$ is the exponent of matrix multiplication.
\end{theorem}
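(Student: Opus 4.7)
The plan is to reduce the whole circuit to a single labeled matrix by using only $\oplus$ and matrix multiplication in $\mcC$, then apply the formula $\tr(f)=\det(I+f)$ from the preceding paragraphs, and finally tally the cost of each reduction step.

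First, I would take the expression $(f_{1,1}\ot\cdots\ot f_{1,n_1})\circ\cdots\circ (f_{m,1}\ot\cdots\ot f_{m,n_m})$ representing the circuit and, layer by layer, combine the tensor factors in the $k$-th layer into a single labeled matrix $M_k$ via the direct-sum rule for $\ot_\mcC$ established in Proposition \ref{thm:sdsmc}. Each such $M_k$ has row and column dimension at most $w$ (since the width is the logarithm of a dimension of a tensor space, I would observe that assembling each layer from its tensor factors is essentially free, amounting to placing the blocks $f_{k,j}$ along the diagonal of $M_k$; the cost is dominated by simply writing down the entries, hence $O(w^2)\subseteq O(w^\omega)$). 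This produces $m=d$ matrices $M_1,\dots,M_d$, each of size at most $w\times w$.

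Next I would compose the layers by ordinary matrix multiplication, using that $\sDet$ intertwines $\circ$ in $\mcC$ with composition in $\mcD$ via Cauchy--Binet (Lemma \ref{lem:stdetStrMonEq}), so the final morphism is represented by the single matrix $M=M_1M_2\cdots M_d$. To keep the cost low I would multiply starting from whichever end has the narrower wires, and more importantly, note that after the first and before the last layer the intermediate matrices can always be trimmed to the active wire set of width $\le w$. Each of the $d-1$ multiplications of two matrices of size $O(w)\times O(w)$ costs $O(w^\omega)$, for a total of $O(dw^\omega)$.

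Finally, since the domain and codomain of the whole circuit is $\unit_\mcC$, in fact $M$ lives in $\Hom_\mcC(A,A)$ for some $A$ of width at most $c$ (the input/output width), and the trace of the circuit in $\mcD$ equals $\sDet$ applied to $\tr(M)=\det(I_c+M)$, using the definition $\tr(f)=\det(I+f)$ in $\mcC$ together with Lemma \ref{lem:stdetDagSymEq} which says $\sDet$ respects the trace. Computing this $c\times c$ determinant by standard Gaussian elimination (or any $O(n^\omega)$ algorithm) costs $O(c^\omega)$, giving the overall bound $O(dw^\omega+c^\omega)$; absorbing $c^\omega$ into $dw^\omega$ since $c\le w$ and $d\ge 1$ yields the stated $O(dw^\omega)$. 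The only mildly delicate step is bookkeeping that the intermediate matrices really can be kept of width $O(w)$ throughout the layerwise reduction so the per-layer cost is $O(w^\omega)$ rather than something larger; once that is observed the estimate is immediate.
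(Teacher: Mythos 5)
Your proof is correct and follows essentially the same route as the paper: collapse the circuit to a single labeled matrix via $\oplus$ and matrix multiplication, then evaluate the trace as $\det(I+M)$ using Proposition \ref{prop:detkernel}; you simply make the $O(dw^\omega)$ cost accounting explicit where the paper leaves it implicit. One small repair: you should not cite Lemma \ref{lem:stdetDagSymEq} to identify the $\mcD$-trace with $\det(I+M)$, since the paper proves that lemma by appealing to Theorem \ref{thm:detevaltime} itself; instead, contract $\sDet(M)$ against the identity directly, which yields $\sum_{\indexset{I}}\det M_{\indexset{I},\indexset{I}}$, and then invoke Proposition \ref{prop:detkernel}. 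With that substitution the argument is complete.
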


\begin{proof}
We have an $n\times n$ matrix with equal row and column labels, which we may assume to be $1,\dots,n$. Then $$\sDet(M)=\sum_{\mathsf{I},\mathsf{J}\subseteq[n]}{\det(M_{\mathsf{I},\mathsf{J}})|\mathsf{I}\rangle\langle \mathsf{J}|}$$ and contracting this against itself gives $$\sum_{\mathsf{I},\mathsf{J}\subseteq[n]}{\det(M_{\mathsf{I},\mathsf{J}})\langle \mathsf{J}|\mathsf{I}\rangle\langle \mathsf{J}|\mathsf{I}\rangle}=\sum_{\mathsf{I} \subset [n]} \det M_{\mathsf{I},\mathsf{I}}.$$ That is, the trace of a matrix $M$ in $\mcC$ is the exponentially large sum of its $2^n$ principal minors; we claim that $\det(I+A)$ is precisely this sum (Proposition \ref{prop:detkernel}).  This enables us to compute this number in time $n^\omega$. 
\end{proof}

The following identity is well-known (e.g.\ it can be derived from results in ~\cite{horn1990matrix}); we include a proof for completeness.

\begin{proposition}\label{prop:detkernel}
Given an $n\times n$ matrix $M$, $$\det(I+M)=\sum_{\mathsf{J}\subseteq [n]}{\det(M_\mathsf{J})}$$
 where $M_{\mathsf{J}}=M_{\mathsf{J},\mathsf{J}}$.
\end{proposition}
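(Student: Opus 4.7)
My plan is to prove this via multilinearity of the determinant in the columns of $I+M$. Write the $i$-th column of $I+M$ as $e_i + m_i$, where $e_i$ is the $i$-th standard basis vector and $m_i$ is the $i$-th column of $M$. Since $\det$ is multilinear in the columns, expanding each of the $n$ column sums yields
\[
\det(I+M)=\sum_{\mathsf{J}\subseteq[n]}\det(A^{\mathsf{J}}),
\]
where $A^{\mathsf{J}}$ is the matrix whose $j$-th column is $m_j$ if $j\in\mathsf{J}$ and $e_j$ if $j\notin\mathsf{J}$. So the task reduces to showing $\det(A^{\mathsf{J}})=\det(M_{\mathsf{J}})$ for each fixed $\mathsf{J}$.

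For the fixed-subset identification, I would apply the simultaneous row and column permutation that lists the indices of $\mathsf{J}$ first, followed by the indices of $\mathsf{J}^c$. Because the same permutation is applied to rows and columns, the signs cancel and the determinant is unchanged. The permuted $A^{\mathsf{J}}$ is then block lower triangular,
\[
\begin{pmatrix} M_{\mathsf{J},\mathsf{J}} & 0 \\ M_{\mathsf{J}^c,\mathsf{J}} & I_{\mathsf{J}^c} \end{pmatrix},
\]
since the columns indexed by $\mathsf{J}^c$ are standard basis vectors supported in the rows indexed by $\mathsf{J}^c$ (giving an identity block below and zeros above). Its determinant is $\det(M_{\mathsf{J},\mathsf{J}})\cdot\det(I_{\mathsf{J}^c})=\det(M_{\mathsf{J}})$, completing the identification and hence the proof.

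I do not expect any real obstacle: the argument is purely a bookkeeping exercise in the Leibniz or Laplace expansion, and the main thing to get right is the sign tracking when the column indexed by $i\notin\mathsf{J}$ is replaced by $e_i$. Using the simultaneous row-column permutation avoids any per-column sign calculation, which is why I prefer it over iteratively Laplace-expanding along each $e_i$-column (that approach works too, since each such expansion contributes $(-1)^{i+i}=1$ and strikes out matching rows and columns, but tracking the shifting indices is slightly more cumbersome). As a sanity check, one can alternatively observe that $\det(tI+M)$ is the characteristic polynomial and its coefficient of $t^{n-k}$ is the sum of $k\times k$ principal minors of $M$; evaluating at $t=1$ gives the same identity, but the multilinearity argument is self-contained and avoids invoking this external fact.
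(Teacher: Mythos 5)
Your proposal is correct and follows essentially the same route as the paper: both expand $\det(I+M)$ by multilinearity in the columns $e_i+m_i$ into $2^n$ terms indexed by subsets $\mathsf{J}$, and then identify each term with the principal minor $\det(M_{\mathsf{J}})$. The only difference is cosmetic — the paper performs the identification by iterated Laplace expansion along the $e_j$ columns (the alternative you yourself mention), while you use a simultaneous row-column permutation to reach block-triangular form; both are valid.
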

\begin{proof}

Let $u_i$ be the columns of $M$ and $e_\mathsf{i}$ the standard basis vectors, $\mathsf{i}\in[n]$. Then $\det(I+M)=\bigwedge_{\mathsf{i}=1}^{n}{(e_\mathsf{i}+u_\mathsf{i})}$. Expanding this gives the sum of the determinants of all $2^n$ matrices with $i$th column either $u_\mathsf{i}$ or $e_\mathsf{i}$. 

Consider one of these matrices, $W$. Let $\mathsf{J}\subseteq[n]$ be the set of indices of the $u_\mathsf{j}$ appearing as columns in $W$. Then for any $\mathsf{j}\notin \mathsf{J}$, $e_\mathsf{j}$ is a column of $W$. Using the Laplace expansion, $\det(W)=\det(W_{\overline{\mathsf{j}}})$, where $W_{\overline{\mathsf{j}}}$ is $W$ with the $\mathsf{j}$th row and column omitted. Then iterating the Laplace expansion gives us that $\det(W)=\det(M_{\mathsf{J}})$.
\end{proof}

A monoidal category is said to {\it have duals for objects} or be {\it closed} if each object $A$ has a dual object $A^*$ related by an adjunction $(A,A^*, i_A, e_A)$. 
Note that while  $\mcD$ could be equipped with the object duality structure $(A,A^*, i_A, e_A)$ from the category of finite-dimensional vector spaces to obtain a dagger closed compact category, the matrix category $\mcC$ is {\em not} a closed compact category: it lacks the morphisms $i_A$ (coevaluation) and $e_A$ (evaluation). The morphism $e_A:A \ot A^* \ra I$ would have to be the $\sDet$ of a $2 \times 0$ matrix, or the composition of several morphisms to obtain one of this type.


\begin{proposition}
The category $\mcC$ does not have duals for objects.
\end{proposition}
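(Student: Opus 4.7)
My plan is to proceed by contradiction: I will argue that no nonempty object $A$ of $\mcC$ can admit a dual $A^*$ in the sense defined above, regardless of how $A^*$ is chosen. The decisive structural observation is that the monoidal unit $\unit_\mcC$ is the empty ordered subset of $\N$, so for any object $X$ both $\Hom_\mcC(\unit_\mcC,X)$ and $\Hom_\mcC(X,\unit_\mcC)$ are singletons consisting of the unique labeled matrix with zero columns (respectively zero rows). In particular, a candidate coevaluation $i_A:\unit_\mcC\to A^*\otimes_\mcC A$ is necessarily the empty $(|A^*|+|A|)\times 0$ labeled matrix, and a candidate evaluation $e_A:A\otimes_\mcC A^*\to\unit_\mcC$ is the empty $0\times(|A|+|A^*|)$ labeled matrix, with no freedom in their entries.

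The next step is to expand the snake identity $(e_A\otimes_\mcC \id_A)\circ(\id_A\otimes_\mcC i_A)=\id_A$ using the description of $\otimes_\mcC$ on morphisms as block direct sum of labeled matrices. A short bookkeeping calculation shows that $\id_A\otimes_\mcC i_A$ is the $(2|A|+|A^*|)\times |A|$ labeled matrix whose top $|A|\times|A|$ block is $\id_A$ and whose remaining rows vanish, while $e_A\otimes_\mcC \id_A$ is the $|A|\times(2|A|+|A^*|)$ labeled matrix whose rightmost $|A|\times|A|$ block is $\id_A$ and whose other columns vanish. Their matrix product multiplies a factor supported in the last $|A|$ column indices of the intermediate object against a factor supported in its first $|A|$ row indices; these ranges are disjoint as soon as $|A|\geq 1$, so the composition is the zero $|A|\times|A|$ matrix.

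This contradicts the snake identity, since $\id_A$ is nonzero for any nonempty $A$, proving that no such $A$ can have a dual. I expect the only delicate step to be keeping track of row and column labels in the direct-sum blocks, in order to confirm that the identity blocks coming from $\id_A\otimes_\mcC i_A$ and $e_A\otimes_\mcC \id_A$ really do land in disjoint coordinate ranges of $A\otimes_\mcC(A^*\otimes_\mcC A)$. Once that bookkeeping is in hand the obstruction is transparent: the empty evaluation/coevaluation pair forced by $\unit_\mcC=\emptyset$ simply cannot reconnect the two copies of $\id_A$ left behind by the direct sums, and so the adjunction $(A,A^*,i_A,e_A)$ cannot exist in $\mcC$.
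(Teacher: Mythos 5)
Your proposal is correct, and it rests on the same decisive observation as the paper's proof --- namely that $\Hom_\mcC(\unit_\mcC,X)$ and $\Hom_\mcC(X,\unit_\mcC)$ are singletons consisting of the unique labeled matrix with zero columns (resp.\ zero rows), so any candidate coevaluation and evaluation are completely forced. Where you diverge is in how you derive the contradiction. The paper's proof is a one-line comparison under $\sDet$: the unique $2\times 0$ matrix maps to $\ket{00}$, whereas the coevaluation needed for the standard duality on $\C^2$ is $\ket{00}+\ket{11}$; this is quick but implicitly assumes the duality data in $\mcC$ would have to be carried to the standard one in $\fdVect_\C$, and it does not explicitly rule out other choices of $A^*$. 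You instead stay entirely inside the matrix category and check the snake identity directly: $\id_A\otimes_\mcC i_A$ is a column block $\bigl(\begin{smallmatrix}\id_A\\0\end{smallmatrix}\bigr)$, $e_A\otimes_\mcC \id_A$ is a row block $(0\ \ \id_A)$, and their product is the zero matrix rather than $\id_A$. This buys you a fully self-contained argument that works for every nonempty object $A$ and every candidate dual $A^*$, at the cost of the label bookkeeping you flag (which is harmless here, since the unitors and associators of $\mcC$ are identities and the direct-sum blocks occupy disjoint index ranges). Both arguments are sound; yours is the more airtight of the two.
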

\begin{proof}
We cannot have $e_A = \sDet(M)$ for any $M$.  The morphism we want is $\ket{00}+ \ket{11}$, but there is a unique $2 \times 0$ matrix $M$ and $\sDet(M)=\ket{00}$.

\end{proof}
As a consequence, we really do have to work with traced categories rather than the more convenient dagger closed compact categories ~\cite{joyal1996traced}.

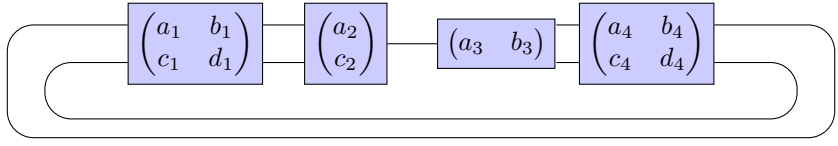
\begin{figure}
\begin{tikzpicture}
[Atensor/.style={rectangle,draw, fill=blue!20}]
\node (a0) at (0,0) [Atensor] {$\begin{pmatrix}a_1&b_1\\c_1&d_1 \end{pmatrix}$};
\draw (.9,.25) -- (2,.25);
\draw (.9,-.25) -- (2,-.25);
\node (a1) at (2,0) [Atensor] {$\begin{pmatrix}a_2\\c_2 \end{pmatrix}$};
\draw (2.55,0) -- (3.5,0);
\node (a2) at (4,0) [Atensor] {$\begin{pmatrix}a_3&b_3 \end{pmatrix}$};
\draw (4.8,.25) -- (6,.25);
\draw (4.8,-.25) -- (6,-.25);
\node (a3) at (6,0) [Atensor] {$\begin{pmatrix}a_4&b_4\\c_4&d_4 \end{pmatrix}$};
\draw[rounded corners=10pt] (6.9,-.25) -- (8,-.25) -- (8,-1) -- (-2,-1) -- (-2,-.25) -- (-.9,-.25);
\draw[rounded corners=10pt] (6.9,.25) -- (8.5,.25) -- (8.5,-1.25) -- (-2.5,-1.25) -- (-2.5,.25) -- (-.9,.25);
\end{tikzpicture}
\caption{An example of a determinantal circuit (wires oriented clockwise).  The four tensors in $\Vect_\C$, from left to right, are obtained by applying  $\sDet$ to each matrix.  Letting $V\!=\!\C^2$, they lie in $(V^*)^{\ot 2} \otimes V^{\ot 2}$, $(V^*)^{\ot 2} \otimes V$, $V^* \otimes V^{\ot 2}$, and $(V^*)^{\ot 2} \otimes V^{\ot 2}$ respectively.} \label{fig:detcirex}
\end{figure}

A diagram in the equivalent categories $\mcC, \mcD$ is called a {\em determinantal circuit}, an example is given in Figure \ref{fig:detcirex}. When the morphism represented is a field element, it computes the partition function, i.e. counts the weighted number of solutions to the weighted counting constraint satisfaction problem it represents.  Because these categories have a traced, dagger braided monoidal category structure, they come with a corresponding graphical language ~\cite{selinger2009survey}. 

It is also a question of interest which tensors are determinantal. One can test whether a vector can be the set of determinants of minors from a matrix using the Pl\"ucker relations to obtain the relations among general minors of matrices.  On the other hand, for minors of a fixed size this is an open problem ~\cite{bruns2011relations}.

\section{Applications} \label{ssec:DCApp}

\subsection{Multicycles}

We now discuss a diagrammatic language and describe what determinantal circuits count in terms of \emph{multicycles}.  Our aim is 
to facilitate the application of determinantal circuits to specific counting problems.

Our convention shall be that tensors will be composed from right to left and that tensoring will be from top to bottom. A determinantal circuit is given as the trace of a composition of linear maps $(f_{1,1} \otimes \cdots \otimes f_{1,n_1}) \circ \cdots \circ (f_{m,1} \otimes \cdots \otimes f_{m,n_m})$. Let $S_i=f_{i,1}\ot\cdots\ot f_{i,n_i}$.  Let $M^{S_i}$ be the matrix such that $\sDet(M^{S_i})=S_i$.  We call the $S_i$ or associated $M^{S_i}$ \emph{stacks}.  Pictorially, the situation is as follows:

\begin{center}
\begin{tikzpicture}
\draw (0,0) rectangle (.75,.8);
\draw (.375,.4) node{$f_{1,1}$};
\draw (-.35,.1) -- (0,.1);
\draw (-.1,.5) node{$\vdots$};
\draw (-.35,.7) -- (0,.7);
\draw (.75,.1) -- (1.1,.1);
\draw (.85,.5) node{$\vdots$};
\draw (.75,.7) -- (1.1,.7);
\draw[dashed] (-.25,-1.7) rectangle (1,1);
\draw (.4,1.25) node{$S_1$};
\draw (-.35,.1) arc (90:270:1.7);
\draw (-.35,.7) arc (90:270:2.3);

\draw (.375,-.25) node{$\vdots$};

\draw (0,-.75) rectangle (.75,-1.55);
\draw (.42,-1.15) node{$f_{1,n_1}$};
\draw (-.35,-1.45) -- (0,-1.45);
\draw (.75,-1.45) -- (1.1,-1.45);
\draw (-.35,-.85) -- (0,-.85);
\draw (.75,-.85) -- (1.1,-.85);
\draw (-.1,-1.05) node{$\vdots$};
\draw (.85,-1.05) node{$\vdots$};
\draw (-.35,-1.45) arc (90:270:.2);
\draw (-.35,-1.85) -- (4.35,-1.85);
\draw (-.35,-.85) arc (90:270:.8);
\draw (-.35,-2.45) -- (4.35,-2.45);

\draw (2,-.35) node{$\cdots$};
\draw (2,-2.75) node{$\vdots$};

\draw (3,0) rectangle (4,.8);
\draw (3.5,.4) node{$f_{m,1}$};
\draw (2.65,.1) -- (3,.1);
\draw (2.9,.5) node{$\vdots$};
\draw (2.65,.7) -- (3,.7);
\draw (4,.1) -- (4.35,.1);
\draw (4.1,.5) node{$\vdots$};
\draw (4,.7) -- (4.35,.7);
\draw[dashed] (2.75,-1.7) rectangle (4.25,1);
\draw (3.4,1.25) node{$S_m$};
\draw (4.35,.1) arc (90:-90:1.7);
\draw (4.35,.7) arc (90:-90:2.3);
\draw (-.35,-3.3) -- (4.35,-3.3);
\draw (-.35, -3.9) -- (4.35,-3.9);

\draw (3.375,-.25) node{$\vdots$};

\draw (3,-.75) rectangle (4,-1.55);
\draw (3.55,-1.15) node{$f_{m,n_m}$};
\draw (2.65,-1.45) -- (3,-1.45);
\draw (4,-1.45) -- (4.35,-1.45);
\draw (2.65,-.85) -- (3,-.85);
\draw (4,-.85) -- (4.35,-.85);
\draw (2.9,-1.05) node{$\vdots$};
\draw (4.1,-1.05) node{$\vdots$};
\draw (4.35,-1.45) arc (90:-90:.2);
\draw (4.35,-.85) arc (90:-90:.8);
\end{tikzpicture}.
\end{center}
Forgetting, for a moment, the categorical structure of the circuit, we consider the above as a graph.
\begin{defn}
 A \emph{multicycle} of a graph is an edge-disjoint union of cycles in the graph. We consider the empty graph a multicycle.
\end{defn}
We are interested in whether a subgraph can be interpreted as several cycles, not which edges are in which particular cycles. 
Call two multicycles equivalent if they contain the same edges, and denote an equivalence class of multicycles by $[\mathscr{C}]$.

\begin{defn}
 A \emph{weighted multicycle} of a determinantal circuit is a multicycle of the underlying graph where each cycle in the multicycle is assigned a scalar. The weight of the multicycle is the product of these scalars.
\end{defn}

\begin{proposition}
 Given a determinantal circuit, let $\mathscr{M}$ be the set of all equivalence classes of its multicycles. There exists an assignment of a weight $W_{[\mathscr{C}]}$ to every $[\mathscr{C}]\in\mathscr{M}$ such that the value of the determinantal circuit is $\sum_{[\mathscr{C}]\in\mathscr{M}}{W_{[\mathscr{C}]}}$.
\end{proposition}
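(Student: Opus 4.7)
The plan is to expand the definition $\sDet(M^{S_i})=\sum_{\mathsf{I}_i,\mathsf{J}_i}\det(M^{S_i}_{\mathsf{I}_i\mathsf{J}_i})\ketbra{\mathsf{I}_i}{\mathsf{J}_i}$ inside the trace that the circuit computes, and read off the surviving terms combinatorially. Orthonormality of the basis forces compatibility between adjacent stacks: on each shared wire the index chosen on the output end of $S_i$ must equal the index chosen on the input end of $S_{i+1}$, and the trace loop-back imposes the same match on the feedback wires. Collecting the wires on which the basis index is $1$, a consistent global choice of surviving term is therefore just a subset $E$ of wires, i.e.\ an arbitrary subgraph of the underlying graph.

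Because each stack $S_i=f_{i,1}\otimes\cdots\otimes f_{i,n_i}$ is a parallel tensor, $M^{S_i}$ is block-diagonal, so $\det(M^{S_i}_{\mathsf{I}_i\mathsf{J}_i})$ factors across the boxes $f_{i,j}$ and vanishes unless, at each box, the number of selected input wires equals the number of selected output wires. This local in-equals-out condition at every vertex is exactly the condition for $E$ to be a directed Eulerian subgraph, which decomposes into edge-disjoint directed cycles; in the undirected underlying graph these are precisely the cycles of a multicycle. Only subsets $E$ corresponding to some class $[\mathscr{C}]\in\mathscr{M}$ contribute, and the circuit value equals $\sum_{[\mathscr{C}]}W_{[\mathscr{C}]}$ with $W_{[\mathscr{C}]}:=\prod_{i,j}\det(M^{f_{i,j}}_{\mathsf{I}_{i,j}\mathsf{J}_{i,j}})$, where the local index sets are determined by $E$.

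To check the \emph{product of per-cycle scalars} form required by the definition of a weighted multicycle, I would expand each local determinant via the Leibniz formula. A choice of permutation at every box, combined with the fixed wiring, yields a bijection on $E$ whose orbits are precisely the directed cycles of one specific multicycle in the class $[\mathscr{C}]$; the product of matrix entries together with the permutation signs then factors over those orbits. This exhibits each term of the expansion of $W_{[\mathscr{C}]}$ as the weight of a specific multicycle in the equivalence class, realized as a product of per-cycle scalars, so that summing these multicycle weights over the class recovers $W_{[\mathscr{C}]}$.

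I expect the main technical care to lie in the sign bookkeeping when two or more cycles of a multicycle meet at a common vertex: the sign of the permutation chosen at a shared vertex must split correctly among the cycles passing through it, so that the per-cycle scalars are genuinely local and the multicycle weight factors as a product over its cycles rather than carrying cross-cycle terms. Once this sign-factorization is verified (using that $\sgn$ is multiplicative over disjoint orbits), the remainder of the argument is a direct regrouping of the Leibniz expansion by edge sets.
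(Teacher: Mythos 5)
Your proposal is correct and follows essentially the same route as the paper: expand $\sDet$ of each stack inside the trace, use orthonormality to force index matching across composed stacks and the feedback wires, use the block-diagonal structure of $M^{S_i}$ to get the local in-equals-out condition that makes the surviving edge sets exactly the multicycle classes, and take $W_{[\mathscr{C}]}$ to be the product of the local minors (the paper is terser, deducing the vertex-level balance from non-square minors vanishing, but the mechanism is identical). Your third and fourth paragraphs, on realizing $W_{[\mathscr{C}]}$ as a sum of genuinely per-cycle-factored weights via the Leibniz expansion, go beyond what the paper proves and are not needed for the statement as given, so the fact that they are left partly as a plan is not a gap.
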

\begin{proof}
 
A determinantal circuit with a single $n\times n$ matrix $M$ has value $$\det(I+M)=\sum_{\indexset{I}\subseteq[n]}{\det(M_\indexset{I})} =\Tr(\sDet(M))=\Tr\bigg(\sum_{\indexset{I}\subseteq[n]}{\det(M_\indexset{I})|\indexset{I}\rangle\langle\indexset{I}|}\bigg).$$

A general determinantal circuit is the trace of a composition of stacks $S_1\circ\cdots\circ S_m$. Let $E_k$ be the set of edges entering $S_k$ from the left 
and 
exiting $S_{k-1}$ to the right, and observe that
 $$\Tr(\sDet(M^{S_1}\circ\cdots\circ M^{S_m}))=\Tr\bigg(\sum_{ \indexset{I}_k\subseteq E_k}{\prod{\det(M_{\indexset{I}_k}^{S_k})}|\indexset{I}_1\rangle\langle\indexset{I}_2|\indexset{I}_2\rangle\cdots\langle\indexset{I}_m|\indexset{I}_m\rangle\langle\indexset{I}_{1}|}\bigg)$$
\begin{equation} \label{eq:sumofprod}
=\sum_{\indexset{I_k}\subseteq E_k}{\prod{\det(M_{\indexset{I}_k}^{S_k})}}.
\end{equation}
 
 We want to describe \eqref{eq:sumofprod} as a sum over equivalence classes of multicycles of $S_1\circ\cdots\circ S_m$. 
Consider 
the subgraph of the determinantal circuit whose edges are those in the sets $\indexset{I}_k$. 
We claim that if the subgraph does not correspond to an equivalence class of multicycles, $\prod{\det(M_{\indexset{I}_k}^{S_k})}=0$.
 
Each summand $\prod{\det(M_{\indexset{I}_k}^{S_k})}$ in \eqref{eq:sumofprod} will be non-zero only if $|\indexset{I}_1|=\cdots=|\indexset{I}_m|$ as the determinant of a non-square matrix is zero.This implies that the number of edges of a entering a vertex from the left in the underlying graph must equal the number of edges exiting it to the right. 
This is sufficient for the circuit subgraph given by the subsets $\indexset{I}_k$ to be viewable as a multicycle. 

We have not specified 
a cycle decomposition of the multicycle
, so each circuit subgraph 
represents an equivalence class of multicycles with weight $\prod{\det(M_{\indexset{I}_k}^{S_k})}$.
\end{proof}

\begin{example}
Suppose we are given the following determinantal circuit:
\begin{center}
\begin{tikzpicture}
 [Atensor/.style={rectangle,draw, fill=none}]
\node (a0) at (0,0) [Atensor] {$\begin{pmatrix}a&b\\c&d \end{pmatrix}$};
\draw (-1,-.2) -- (-.75,-.2);
\draw (-1,.2) -- (-.75,.2);
\draw (.75,-.2) -- (1,-.2);
\draw (.75,.2) -- (1,.2);
\draw (-1,-.2) arc (90:270:.4);
\draw (-1,.2) arc (90:270:.8);
\draw (1,.2) arc (90:-90:.8);
\draw (1,-.2) arc (90:-90:.4);
\draw (-1,-1) -- (1,-1);
\draw (-1,-1.4) -- (1,-1.4);
\end{tikzpicture}. 
\end{center}
Its value is the sum of the principal minors of the matrix: $1+a+d+ad-bc$. In the picture below we draw the weighted multicycles in bold on the circuit:
\begin{center}
\begin{tikzpicture}
 [Atensor/.style={rectangle,draw, fill=none}]
\node (a0) at (0,0) [Atensor] {$\begin{pmatrix}a&b\\c&d \end{pmatrix}$};
\draw[dashed] (-1,-.2) -- (-.75,-.2);
\draw[dashed] (-1,.2) -- (-.75,.2);
\draw[dashed] (.75,-.2) -- (1,-.2);
\draw[dashed] (.75,.2) -- (1,.2);
\draw[dashed] (-1,-.2) arc (90:270:.4);
\draw[dashed] (-1,.2) arc (90:270:.8);
\draw[dashed] (1,.2) arc (90:-90:.8);
\draw[dashed] (1,-.2) arc (90:-90:.4);
\draw[dashed] (-1,-1) -- (1,-1);
\draw[dashed] (-1,-1.4) -- (1,-1.4);
\draw (0,1) node{weight=1};

\draw (4,1) node{weight=$a$};
\node (a0) at (4,0) [Atensor] {$\begin{pmatrix}a&b\\c&d \end{pmatrix}$};
\draw[dashed] (3,-.2) -- (3.25,-.2);
\draw[thick] (3,.2) -- (3.25,.2);
\draw[dashed] (4.75,-.2) -- (5,-.2);
\draw[thick] (4.75,.2) -- (5,.2);
\draw[dashed] (3,-.2) arc (90:270:.4);
\draw[thick] (3,.2) arc (90:270:.8);
\draw[thick] (5,.2) arc (90:-90:.8);
\draw[dashed] (5,-.2) arc (90:-90:.4);
\draw[dashed] (3,-1) -- (5,-1);
\draw[thick] (3,-1.4) -- (5,-1.4);

\end{tikzpicture}
\end{center}
\begin{center}
\begin{tikzpicture}
 [Atensor/.style={rectangle,draw, fill=none}]
\node (a0) at (0,0) [Atensor] {$\begin{pmatrix}a&b\\c&d \end{pmatrix}$};
\draw[thick] (-1,-.2) -- (-.75,-.2);
\draw[dashed] (-1,.2) -- (-.75,.2);
\draw[thick] (.75,-.2) -- (1,-.2);
\draw[dashed] (.75,.2) -- (1,.2);
\draw[thick] (-1,-.2) arc (90:270:.4);
\draw[dashed] (-1,.2) arc (90:270:.8);
\draw[dashed] (1,.2) arc (90:-90:.8);
\draw[thick] (1,-.2) arc (90:-90:.4);
\draw[thick] (-1,-1) -- (1,-1);
\draw[dashed] (-1,-1.4) -- (1,-1.4);
\draw (0,1) node{weight=$d$};

\draw (4,1) node{weight=$ad-bc$};
\node (a0) at (4,0) [Atensor] {$\begin{pmatrix}a&b\\c&d \end{pmatrix}$};
\draw[thick] (3,-.2) -- (3.25,-.2);
\draw[thick] (3,.2) -- (3.25,.2);
\draw[thick] (4.75,-.2) -- (5,-.2);
\draw[thick] (4.75,.2) -- (5,.2);
\draw[thick] (3,-.2) arc (90:270:.4);
\draw[thick] (3,.2) arc (90:270:.8);
\draw[thick] (5,.2) arc (90:-90:.8);
\draw[thick] (5,-.2) arc (90:-90:.4);
\draw[thick] (3,-1) -- (5,-1);
\draw[thick] (3,-1.4) -- (5,-1.4);

\end{tikzpicture}.
\end{center}
\end{example}

\subsection{Recovering the matrix tree theorem}  \label{sec:ChungLanglands}

\begin{figure} 
 \centering
\subfloat[A rooted graph]{\includegraphics[scale=.2]{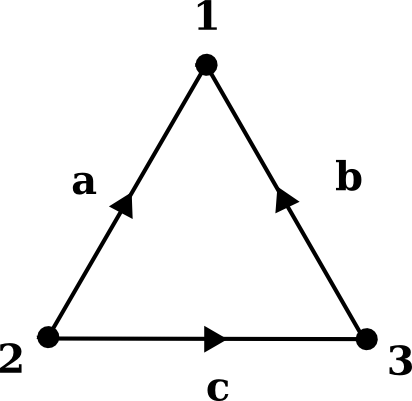}}\quad
\subfloat[Its corresponding circuit]{\includegraphics[scale=.25]{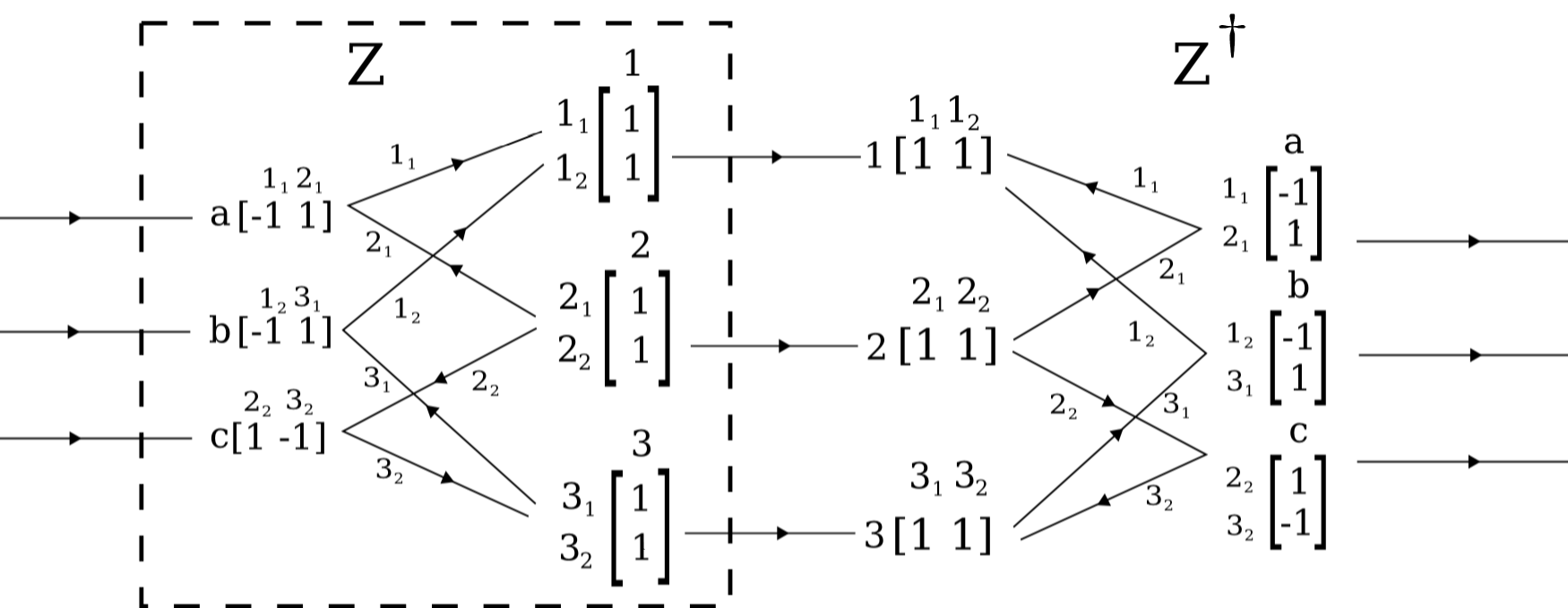}}\quad
\subfloat[Equivalent circuit]{\includegraphics[scale=.3]{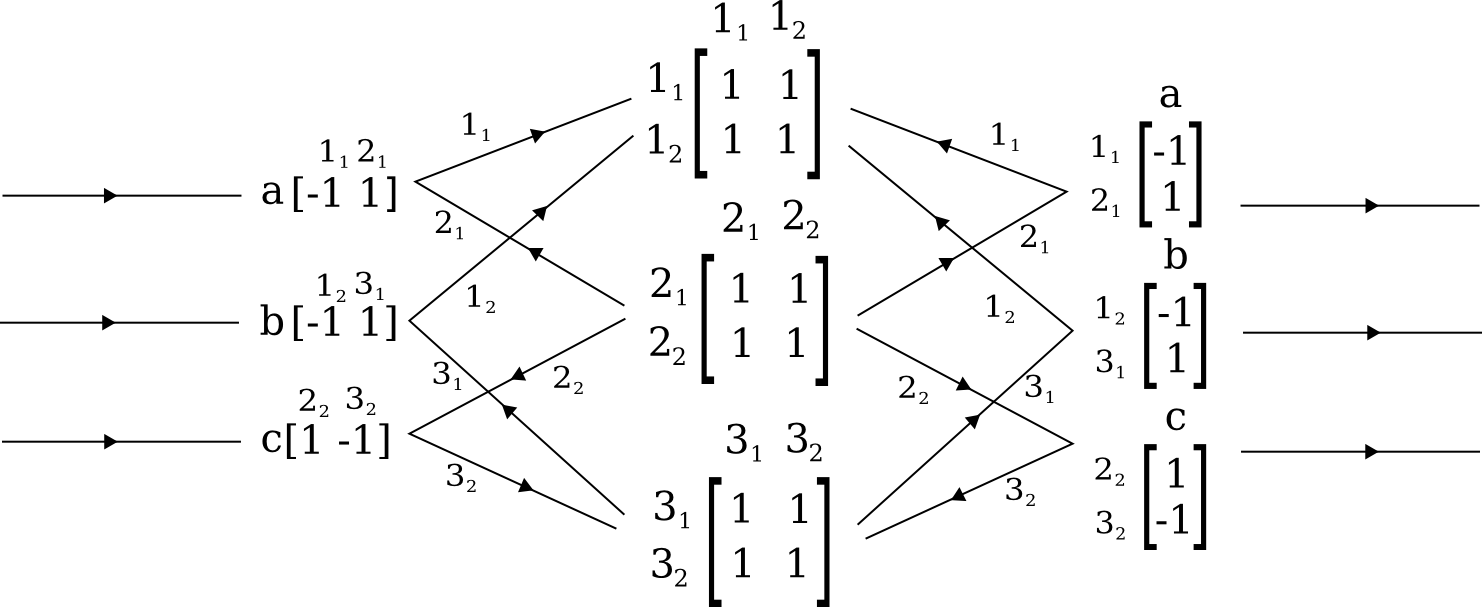}}
\caption{Transforming a rooted graph to a determinantal circuit.}\label{fig:threecircuits}
\end{figure}

One of the main inspirations of determinantal circuits was the rooted spanning form theorem.
\begin{theorem}[~\cite{MR1401006}]\label{thm:chunglanglands} Given a graph $G$, let $B$ be its incidence matrix endowed with an arbitrary orientation. Then $\det(I+BB^T)$ is the number of rooted spanning forests.
\end{theorem}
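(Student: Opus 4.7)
The plan is to realize $\det(I+BB^T)$ as the value of a determinantal circuit built from the graph $G$, then extract its combinatorial meaning by combining Proposition \ref{prop:detkernel} with the Cauchy--Binet formula. The circuit itself is the one depicted in Figure \ref{fig:threecircuits}: one stack carrying the matrix $B^T$ (mapping vertices to edges, with signs determined by the chosen orientation) followed by a stack carrying $B$, so that, after contraction, we obtain a single-matrix circuit whose associated matrix is $BB^T$.

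First I would apply Theorem \ref{thm:detevaltime} together with Proposition \ref{prop:detkernel} to evaluate the trace of this circuit as
\[
\det(I+BB^T) \;=\; \sum_{J\subseteq V}\det\bigl((BB^T)_{J}\bigr).
\]
Next I would expand each principal minor of $BB^T$ via Cauchy--Binet applied to $B_{J,*}$ (writing $B^T_{K,J}=(B_{J,K})^T$):
\[
\det(I+BB^T) \;=\; \sum_{J\subseteq V}\ \sum_{\substack{K\subseteq E\\ |K|=|J|}}\det(B_{J,K})^{2}.
\]

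The main step is to identify exactly which pairs $(J,K)$ contribute. I would prove the standard incidence-matrix lemma: for an oriented incidence matrix $B$, and $J\subseteq V$, $K\subseteq E$ with $|J|=|K|$, one has $\det(B_{J,K})=\pm 1$ when the edge-subgraph $(V,K)$ is a forest in which every connected component contains exactly one vertex of $V\setminus J$, and $\det(B_{J,K})=0$ otherwise. The proof is by induction on $|K|$: if $(V,K)$ contains a cycle, the oriented cycle yields a $\pm 1$ combination of the corresponding columns of the full incidence matrix that vanishes, forcing linear dependence in $B_{J,K}$; otherwise $(V,K)$ is a forest, and I would Laplace-expand along the row of a leaf of a component whose unique vertex not lying in $J$ has not yet been used, reducing to a smaller instance. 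This structural lemma is the main technical obstacle, though it is classical (it is the ``all minors'' form of the matrix-tree theorem).

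Granted the lemma, each nonvanishing term contributes exactly $1$, and the surviving pairs $(J,K)$ are in bijection with rooted spanning forests of $G$: $K$ is the edge set of the forest and $V\setminus J$ is the set of roots, one per connected component. Summing yields
\[
\det(I+BB^T) \;=\; \#\{\text{rooted spanning forests of }G\},
\]
which is Chung--Langlands.
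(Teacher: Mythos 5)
Your proof is correct, but it takes a genuinely different route from the paper's. The paper never invokes Cauchy--Binet at the level of the incidence matrix; instead it builds the string diagram $ZZ^{\dagger}$ out of edge nodes and vertex nodes, develops the multicycle calculus for determinantal circuits, and then argues combinatorially: straightline multicycles are in bijection with rooted spanning forests and each has weight $+1$ (Lemmas \ref{lem:wtone} and \ref{lem:spanfor}), while the four multicycle representations of any cycle in $G$ split into two of weight $+1$ and two of weight $-1$ (Lemma \ref{lem:weight}) and cancel, so only the forests survive. You instead collapse the circuit to the single matrix $BB^T$ immediately, apply Proposition \ref{prop:detkernel} and Cauchy--Binet, and reduce everything to the classical all-minors lemma for oriented incidence matrices ($\det(B_{J,K})=\pm 1$ exactly when $K$ is a forest with one root per component in $V\setminus J$, and $0$ otherwise). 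Your version is shorter, makes the bijection with rooted spanning forests completely transparent, avoids the delicate sign-tracking through the braiding and the orientation of wires, and yields the refinement $\det(Ix+BB^T)=\sum_k (\#\text{forests with }k\text{ roots})\,x^k$ for free since $|V\setminus J|$ is the number of roots; the cost is that the squared minors hide the cancellation phenomenon, whereas the paper's weighted-multicycle argument exhibits it explicitly and serves as the intended demonstration of the diagrammatic machinery (which is the point of Section \ref{ssec:DCApp}). Do make sure, when writing up the incidence-matrix lemma, to handle the degenerate component cases (a component entirely contained in $J$, or containing two or more vertices of $V\setminus J$) via the block structure of $B_{J,K}$, not only the cycle case; both are needed for the ``and $0$ otherwise'' half of the claim.
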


Recalling the discussion from Section \ref{sec:CCC}, our problem $\mcL$ is to count the number of rooted spanning forests in a graph. Since our problem is in $\#P$, there is a known way to encode the problem as any $\#P$-complete problem, e.g. a $\#SAT$ problem, which can be easily turned into a tensor contraction problem in $\fdVect_\C$ giving a map $f$.

The map $f$ represents the na\"ive way of turning the problem $\mcL$ into a contraction problem. If we use the $\sDet$ functor, this gives a map from $\C$ to $\fdVect_\C$, and we wish to construct an interpretation map $i$ from $\mcL$ into $\mcC$ such that the following diagram commutes:
\[
\xymatrix{
\mcL \ar[r]^{i} \ar[rd]_{f} & \mcC \ar[d]^{h}\\
&\mcS 
}
\]
 where $\mcS=\Hom(\unit,\unit)=\C$ in $\fdVect_\C$. The map $i$ is not obvious, so we construct explicitly.

 We construct a string diagram $ZZ^{\dagger}$ in $\mcC$ which can be reduced to a determinantal circuit consisting of only the matrix $BB^T$ using the operations of $\oplus$ and matrix multiplication. An example of a graph is given in Figure \ref{fig:threecircuits}(a) and the determinantal circuit constructed for it in Figure \ref{fig:threecircuits}(b).
%

Choose an arbitrary orientation on the given graph $G=\{V,E\}$. We first build a string diagram, $Z$, from a collection of $\mcC$-morphisms (nodes); there is one node for every edge and vertex of $G$. Denote an edge of $G$ by $\epsilon$, the edge node in $Z$ corresponding to it by $e$ and the edge node in $Z^{\dagger}$ corresponding to it by $e^{\dagger}$. Denote a vertex in $G$ by $\nu$ and its node in $D_G$ by $v$. An edge node is connected to a vertex node if the edge and vertex are incident in $G$. 

Define an orientation on $Z$ which has no categorical meaning, but is used in the proof. An wire in $Z$ connecting an edge and vertex node is oriented towards the vertex node if that vertex is a sink for the edge in $G$; otherwise the wire is oriented towards the edge node.  Arrange $Z$ into two stacks: the first consists of the edge nodes, the second of the vertex nodes. The dashed box in Figure \ref{fig:threecircuits}(b) gives an example of this construction.

Edge nodes are $1\times 2$ matrices, vertex nodes are $d(\nu)\times 1$ matrices, where $d(\nu)$ is the degree of $\nu$.  The matrix $M_e$ associated to an edge node $e$ in $Z$ is either $[1\;-1]$ or $[-1\;1]$; it  has a $-1$ in the column corresponding to the output wire oriented away from $e$ and a $1$ in the other column.  Let $v$ be a vertex node. The matrix $M_v$ associated with a vertex node $v$ is a $d(\nu)\times 1$ matrix with every entry equal to $1$. 
Although in general we suppress it in pictures, whenever two wires cross, we put the braiding matrix $\begin{pmatrix}0&1\\1&0\end{pmatrix}$ on the crossing.

\begin{lemma}
 Using the operations of matrix multiplication and $\oplus$, the matrices in $Z$ collapse to the incidence matrix of $G$ with some orientation placed on it.
\end{lemma}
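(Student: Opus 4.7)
The plan is to compute directly what $Z$ evaluates to by first $\oplus$-ing each of the two stacks and then composing the results, tracking the braiding wires in between. Writing $M_E := \bigoplus_{e} M_e$ for the first stack produces a block-diagonal $|E|\times 2|E|$ matrix whose block for edge $e$ is the $1\times 2$ row $[1,-1]$ or $[-1,1]$ prescribed by the orientation rule. Writing $M_V := \bigoplus_{v} M_v$ for the second stack produces a block-diagonal $\bigl(\sum_{\nu} d(\nu)\bigr) \times |V| = 2|E|\times |V|$ matrix whose block for a vertex $v$ is the all-ones column of length $d(\nu)$. So far the only operation used is $\oplus$.

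Next I would handle the wires between the two stacks. The $2|E|$ output columns of $M_E$ are naturally indexed by pairs $(e,s)$ with $s\in\{\text{source},\text{sink}\}$, while the $2|E|$ input rows of $M_V$ are indexed by pairs $(v,i)$ where $i\in[d(\nu)]$ enumerates the edge-endpoints incident to $v$. The string diagram identifies $(e,s)$ with $(v,i)$ exactly when endpoint $s$ of $e$ is the $i$-th endpoint at $v$, and by the braiding convention every crossing contributes the $\bigl(\begin{smallmatrix}0&1\\1&0\end{smallmatrix}\bigr)$ matrix; hence the combined wiring between the two stacks is realized by a permutation matrix $P\in\{0,1\}^{2|E|\times 2|E|}$. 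Thus the entire string diagram $Z$ reduces, via $\oplus$ and matrix multiplication, to the single matrix $M := M_E\, P\, M_V$ of shape $|E|\times |V|$.

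Finally I would verify that $M$ is an incidence matrix of $G$. The entry $M_{e,v}$ expands as $\sum_{s}(M_e)_{s}\,(M_v)_{i(e,s,v)}$ where $s$ ranges over the endpoints of $e$ that are glued to $v$ through $P$. If $v$ is not incident to $e$, the sum is empty and $M_{e,v}=0$. If $v$ is incident to $e$, exactly one $s$ contributes, and the entry is the signed value from $M_e$ at that column: by the orientation convention (the $-1$ sits in the column whose wire is oriented away from $e$, i.e.\ towards the sink), this contributes $-1$ when $v$ is the sink of $e$ and $+1$ when $v$ is the source. This matches the standard incidence matrix of $G$ with the orientation originally chosen on its edges. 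The only real bookkeeping obstacle is making sure the permutation $P$ is well-defined independently of the drawing of the diagram and that the sign convention on the edge nodes lines up with whichever sign convention one uses for oriented incidence matrices; both reduce to unwinding the definitions already set up in the previous paragraphs.
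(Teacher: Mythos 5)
Your proof is correct and follows essentially the same route as the paper's: direct-sum each stack, insert a permutation matrix $P$ for the wire crossings, and check that the $(e,v)$ entry of $M_E P M_V$ is $\pm 1$ or $0$ according to incidence and orientation (your write-up is in fact more explicit about the indexing and sign bookkeeping than the paper's). The only hair-splitting caveat is your phrase ``exactly one $s$ contributes,'' which tacitly assumes $e$ is not a loop; the paper's formulation as ``number of wires $v\to e$ minus number of wires $e\to v$'' covers that degenerate case uniformly, but for the graphs at issue this changes nothing.
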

\begin{proof}
 
Let $E$ be the matrix equal to 
the direct sum of all the matrices on the edge nodes and $V$ be the direct sum of all the matrices on vertex nodes. Then $Z$ reduces to the matrix $A=EPV$ where $P$ is the permutation matrix obtained from crossed wires. Let $e$ be an edge node and let $r_e$ be the row vector of $E$ corresponding to $e$. For any column vector $c_v$ of $PV$ associated with vertex node $v$, $r_e\cdot c_v\ne 0$ if and only if $e$ is incident to $v$. In fact, $r_e\cdot c_v$ is equal to the number of wires $v\to e$ minus the number of wires $e\to v$ in $Z$. This implies that $A=B$, the incidence matrix.

%
%
%
%

\end{proof}

Reflect $Z$ across a vertical line, transposing all node matrices, to obtain $Z^{\dagger}$, which collapses to the matrix $B^T$. Our final circuit $ZZ^{\dagger}$ is the composition of $Z$ with $Z^{\dagger}$. Figures \ref{fig:threecircuits}(a) and (b), show an example of a graph $G$ and its transformation into a circuit $ZZ^{\dagger}$. We denote the determinantal circuit like in Figure \ref{fig:threecircuits}(b) associated to a graph $G$, $D_G$. Then our map $i$ sends $G\to D_G$.

By analyzing the values of the multicycles of $D_G$ and what they represent in the graph $G$, one can arrive at Theorem \ref{thm:chunglanglands}, although the proof via this method is quite tedious. We next look at another example that is a bit more naturally suited to computation by determinantal circuits.

\subsection{Computing the Tutte Polynomial of Lattice Path Matroids}

Lattice path matriods are a particulary nice and frequently occuring class of matriods whose independent sets are determined by monotone paths on some bounded subset of the integer lattice ~\cite{bonin2003lattice}. Their Tutte polynomials can be calculated in polynomial time ~\cite{bonin2007multi} and can be given a combinatorial interpretation in terms of monotone paths. 

As evidenced at the beginning of this section, determinantal circuits are particulary well suited for computing the Tutte polynomials of these matroids. In fact, using determinantal circuits, an improvement to the algorithm over the algorithm given in ~\cite{bonin2007multi}. It had been noticed previously that Pfaffian circuits also were well suited to this problem ~\cite{morton2010pfaffian}. However, in the following section, we show that the algorithm given by Pfaffian circuits does not constitute an improvement over the original algorithm.

Let $P$ and $Q$ be two monotone paths from $(0,0)$ to $(m,r)$ with $P$ never going above $Q$. More precisely, there are no points $(p_1,p_2)\in P$ and $(q_1,q_2)\in Q$ such that $p_1-q_1<0$ or $p_2-q_2>0$. Now consider the region of $\mathbb{Z}^2$ bounded by (and including) $P$ and $Q$. Let $P=p_1p_2\cdots p_{m+r}$ be the steps of $P$ where $p_i$ either a single step north or a single step east. Let $Q=q_1q_2\cdots q_{m+r}$ be likewise.

\begin{defn}
 Let $\{p_{u_1},\dots,p_{u_r}\}$ be the set of north steps of $P$ and $\{q_{\ell_1},\dots,q_{\ell_r}\}$ be likewise. Define $N_i$ to be the interval $[\ell_i,u_i]$. Then define $M[P,Q]$ to be the matroid with ground set $[m+r]$ and independent sets $N_i$, $i\in[r]$. A \emph{lattice path matriod} is any matroid isomorphic to $M[P,Q]$ for some $P$ and $Q$ as described.
\end{defn}

We consider the region of $\mathbb{Z}^2$ bounded by $P$ and $Q$ as graph $G[P,Q]$ where two points are connected if the differ by $(\pm 1,0)$ or $(0,\pm 1)$. The Tutte polynomial of the matroid $M[P,Q]$ can be thought of as the sum over weighted paths in this graph. 

We can associate to each edge in $G[P,Q]$ a scalar or variable. The weight of a path is simply the product of the weights of its edges. We define $\omega(G[P,Q])=\sum{w(p)}$ where $p$ ranges over the monotone paths in $G[P,Q]$ from $(0,0)$ to $(m,r)$ and $w(p)$ is its corresponding weight.

\begin{theorem}[~\cite{bonin2003lattice}]
 The Tutte polynomial of a lattice path matroid $M[P,Q]$ is $\omega(G[P,Q])$ where the weighting of $G[P,Q]$ is such: the north steps of $Q$ have weight $x$, the east steps of $P$ having weight $y$, and all other weights are 1.
\end{theorem}
\begin{theorem}[~\cite{bonin2007multi}]
 The Tutte polynomial of $M[P,Q]$ can be computed in time $O(n^5)$, where $n=m+r$.
\end{theorem}

This can easily be set up as a determinantal circuit. We use the graph $G[P,Q]$ along with another wire connecting $(0,0)$ and $(m,r)$ which we will give weight 1. We simply need to specify the the matrices that we place on each vertex. Given a vertex $v$, we denote its western and southern	 wires as incoming and its other wires as outgoing. A vertex may of course be missing some of these wires. If $M_v$ is the matrix we place on $v$, all of the entries of a column of $M_v$ is the weight of the corresponding outgoing edge. We denote this determinantal circuit $D_{G[P,Q]}$. In this example, our interpretation map $i$ is almost the identity.

\begin{theorem}[~\cite{morton2013computing}]
 The value of the determinantal circuit $D_{G[P,Q]}$ is the Tutte polynomial of $M[P,Q]$ and this gives an algorithm with running time $O(n^4)$ where $n=m+r$.
\end{theorem}

\subsection{Simulating quantum circuits in the presence of closed timelike curves}\label{ssec:DCAppCTC}
Determinantal circuits define a class of tensor networks with a polynomial-time contraction algorithm.  An immediate consequence is that certain types of quantum circuits (or more generally tensor networks possibly including preparations and postselection)  can be simulated efficiently using this technique.  Essentially these are the tensor networks of the type shown in Figure \ref{fig:detcirex} (with arbitrarily many wires and transformations). 

The loop in such a circuit corresponds to a postselected closed timelike curve (P-CTC) ~\cite{lloyd2011closed}. 
The resulting logical category of circuits represent physical experiments (which, if they contain an embedded contradiction, have count zero ~\cite{morton2012undecidability}).

\section{Relation to Pfaffian circuits}\label{sec:PfafvsDet}

Pfaffian Circuits were introduced as a reformulation of matchcircuits ~\cite{morton2010pfaffian},~\cite{QCtcbSiPT}. We present a slightly different definition using category theory. This is because we want to know what the relation of determinant circuits is with respect to Pfaffian circuits.

We now define the category that gives us Pfaffian circuits. Consider the set $\mathscr{M}\times\{0,1\}$, where $\mathscr{M}$ is the set of labeled skew-symmetric matrices. Furthermore, the columns and rows should have the same labels in the same order. The label sets are subsets of $\mathbb{N}$. As before, for $i \in \N$, let $V_i \isom \mathbb{C}^2$ be spanned by an orthonormal basis (with inner product) $v_{i,0}, v_{i,1}$ and for $\indexset{N} \subset \N$ write $V_\indexset{N} := \ot_{i \in \indexset{N}} V_i$. Now let us consider the following function:

$$\sPf:\mathscr{M}\times\{0,1\}\to V^*_\indexset{N}\otimes V_\indexset{N}$$

$$\sPf(M,0)=\sum_{\indexset{I}\subseteq \indexset{N}}{\Pf(M_{\indexset{I}})|\indexset{I}\rangle}$$

$$\sPf(M,1)=\sum_{\indexset{I}\subseteq \indexset{N}}{\Pf(M_{\bar{\indexset{I}}})\langle \indexset{I}|}$$ 

\noindent where $\ket{\indexset{I}}=\bigotimes_{i \in \indexset{N}} v_{i,\chi(i,\indexset{I})}$, $\bra{\indexset{J}}=\bigotimes_{i \in \indexset{M}} v^*_{i,\chi(i,\indexset{J})}$ and the indicator function $\chi(i,\indexset{I})=0$ if $i \notin \indexset{I}$ and $1$ if $i \in \indexset{I}$. We denote by $M_\indexset{I}$ the principal minor of $M$ with row and column labels $\indexset{I}$. $M_{\bar{\indexset{I}}}$ means the principal minor of $M$ with the rows and columns labeled $\indexset{I}$ removed. We will use the convention that $\sPf(M,0)$ will be denoted $\sPf(M)$ and $\sPf(M,1)$ will be denoted $\sPfd(M)$.

The $\sPf$ function lets us define a monoidal subcategory of $\fdVect_\C$. Let $\msP$ be the free monoidal category defined as follows. The objects are of the form $V_\indexset{N}$ for ordered subsets of $\mathbb{N}$, the tensor product being the usual one. The morphisms of $\msP$ are generated by elements from the image of $\sPf$. Composition and tensor product will be inherited from $\fdVect_\C$.

Suppose we are given a Pfaffian circuit $\Gamma$. Let $\Xi_i$ be the morphisms of the form $\sPf(M)$ and $\Theta_i$ be the morphisms of the form $\sPfd(M)$. We define $\Xi=\tilde{\bigoplus}_i{\Xi_i}$ and $\Theta$ likewise. $\tilde{\bigoplus}$ is the direct sum with the row and columns reordered as follows: The ordering is found by drawing a planar curve through the Pfaffian circuit such that every edge is intersected by the curve once and exactly once. Since a Pfaffian circuit is planar and bipartite, such a curve always exists and the result is independent of the choice of curve. The edges are then labeled based on when the curve intersects them. This is ordering used to define $\tilde{\bigoplus}$. $\widecheck{\Theta}$ is defined to be $\{(-1)^{i+j+1}\theta_{ij}\}$.

\begin{theorem}
 The value of a Pfaffian circuit $\Gamma$ is given by $\Pf(\Xi+\widecheck{\Theta})$~\cite{morton2010pfaffian}
\end{theorem}

Thus Pfaffian circuits can be computed in polynomial time. Now we seek a functor transforming determinantal circuits into Pfaffian circuits. Such a functor should preserve the trace so that the resulting Pfaffian circuit solves the same problem as the original determinantal circuit. The functor should also be faithful.

The morphisms in $\mcD$ from $V_\indexset{n}\to V_\indexset{m}$ are isomorphic to the variety $$D_{n,m}:=\{(1,\dots,\det(M_{\indexset{I}\indexset{J}}),\dots,\det(M))| M\in \textnormal{Mat}_{n\times m}\}$$ given by tuples of minors of $n\times m$ matrices. Then define the variety $$P_{n}:=\{(1,\dots,\Pf(M_{\indexset{I}\indexset{J}}),\dots,\Pf(M)| M\in\mathscr{M}_n\},$$ the tuples of minors of $n\times n$ skew-symmetric matrices. This variety is isomorphic to the image of the $\sPf$ functor on the set $\mathscr{M}_n\times\{0\}$. We first want to find an embedding $D_{n,m}\hookrightarrow P_{n+m}$.

We can assume that $n=m$, otherwise, we pad the matrix with columns or rows of zeros as necessary. So we want to find a map $D_{n,n}\hookrightarrow P_{2n}$. For an $n\times n$ matrix $M$, the following formula is classically known:
\begin{equation*}
\Pf
 \begin{bmatrix}
  0&M\\
-M^T&0
 \end{bmatrix}
=(-1)^{n(n-1)/2}\det(M).
\end{equation*}
This embedding of $M$ into a skew-symmetric matrix is close to the map we are looking for, however this na\"ive way may change the sign on some of the minors of $M$. So we must modify this map slightly. Define $\tilde{M}$ as the matrix $M$ reflected across a vertical axis, and define $S(M)$ to be
\begin{equation*}
 S(M)=
\begin{bmatrix}
  0&\tilde{M}\\
-\tilde{M}^T&0
 \end{bmatrix}.
\end{equation*}
\begin{proposition}
For an $n\times n$ matrix $M$, 
 \begin{equation*}
 \Pf(S(M))=\Pf
\begin{bmatrix}
  0&\tilde{M}\\
-\tilde{M}^T&0
 \end{bmatrix}
= \det(M).
\end{equation*}
\end{proposition}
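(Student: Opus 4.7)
The plan is to reduce the identity to two well-known sign facts whose signs happen to cancel; indeed, the entire reason $S$ was defined using $\tilde M$ rather than $M$ is to make this cancellation work.

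First, I would invoke the classical block-matrix identity
\[
\Pf\begin{bmatrix} 0 & A \\ -A^T & 0 \end{bmatrix} \;=\; (-1)^{n(n-1)/2}\det(A)
\]
valid for any $n\times n$ matrix $A$. Using the sum-over-perfect-matchings formula for the Pfaffian already employed in the proof of $\Pf(\hat N)=\Pf(N)$, only those matchings pairing each index $i\in\{1,\dots,n\}$ with some index $n+\sigma(i)\in\{n+1,\dots,2n\}$ contribute, since the diagonal blocks are zero. The Pfaffian therefore collapses to a signed sum over $\sigma\in S_n$, and comparing the crossing-sign of a non-crossing/crossing pattern to the sign of the associated permutation $\sigma$ produces the uniform sign $(-1)^{n(n-1)/2}$ in front of $\det(A)$. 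Taking $A=\tilde M$ gives
\[
\Pf(S(M)) \;=\; (-1)^{n(n-1)/2}\det(\tilde M).
\]

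Second, I would compute $\det(\tilde M)$ in terms of $\det(M)$. Since $\tilde M$ is obtained from $M$ by reversing the column order, $\tilde M = M J_n$ where $J_n$ is the permutation matrix of $\sigma(j)=n+1-j$. This permutation factors into $\lfloor n/2\rfloor$ disjoint transpositions, and a short case analysis on $n\bmod 4$ shows $\lfloor n/2\rfloor \equiv n(n-1)/2 \pmod 2$, so $\det(J_n)=(-1)^{n(n-1)/2}$ and hence $\det(\tilde M) = (-1)^{n(n-1)/2}\det(M)$. Combining the two displays, the factors of $(-1)^{n(n-1)/2}$ multiply to $+1$ and yield $\Pf(S(M))=\det(M)$.

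There is essentially no conceptual obstacle: the proof is pure sign bookkeeping. The one potentially subtle point is verifying the parity identity $\lfloor n/2\rfloor \equiv n(n-1)/2 \pmod 2$, which is where one might slip; the alternative is to bypass $\tilde M$ entirely by expanding $\Pf(S(M))$ directly as a sum over matchings indexed by $\sigma$, then reindexing by $\tau(i)=n+1-\sigma(i)$ so that the product becomes $\prod_i M_{i,\tau(i)}$ with sign $\mathrm{sgn}(\tau)$ and the result $\det(M)$ appears in one step.
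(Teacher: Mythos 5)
Your proof is correct and follows essentially the same route as the paper: apply the block identity $\Pf\bigl[\begin{smallmatrix}0 & A\\ -A^T & 0\end{smallmatrix}\bigr]=(-1)^{n(n-1)/2}\det(A)$ with $A=\tilde M$, note that reversing the columns costs $(-1)^{\lfloor n/2\rfloor}$, and check via the case analysis on $n \bmod 4$ that the two signs always cancel. The only additions beyond the paper's argument are your sketch of why the block identity holds and the remark about a direct matching-reindexing shortcut, neither of which changes the substance.
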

\begin{proof}
 
 In general, $\tilde{M}$ can be made from $M$ with $\lfloor\frac{n}{2}\rfloor$ column swaps. So if $n\equiv 0,1$ modulo 4, $\lfloor\frac{n}{2}\rfloor$ is an even number and so $\det(\tilde{M})=\det(M)$. Now if $n$ is congruent to 0 or 1 modulo 4, then $\Pf(S(M))=(-1)^{n(n-1)/2}\det(\tilde{M})=\det(\tilde{M})=\det(M)$. If $n$ is congruent to 2 or 3 modulo 4, then $\lfloor\frac{n}{2}\rfloor$ is an odd number so $\det(\tilde{M})=-\det(M)$ and $\Pf(S(M))=(-1)^{n(n-1)/2}\det(\tilde{M})=-\det(\tilde{M})=\det(M)$.

\end{proof}

This map will end up giving us the desired embedding. We also need this map to be a functor. The morphisms of $\mcD$ and $\msP$ look quite different. Note that there are two primary types of morphisms in $\msP$, namely those of the form $\sPf(M)$ and those of form $\sPfd(M)$. Thus Pfaffian circuits form bipartite graphs. Determinantal circuits, on the other hand, are not bipartite at all. There are morphisms from $V_{\indexset{n}}\to V_{\indexset{m}}$ for any sets $\indexset{n}$ and $\indexset{m}$ of any size. 

Given how different these circuits look on the surface, we must really look at the categorical properties of $\msP$ to understand how to construct our functor. The key will be the ability to bend wires in Pfaffian circuits in certain ways. In the language of monoidal categories, we will need our category to have daggers and duals for objects.

\begin{theorem}
 $\msP$ is a strict monoidal category with duals for objects.
\end{theorem}
\begin{proof}
 
By our definition of $\msP$, it will be the smallest monoidal subcategory of $\fdVect_\C$ containing the generating morphisms with the specified objects. A monoidal category $(\mcC,\ot,\lambda,\rho,\alpha)$ is strict if the natural transformations $\lambda$, $\rho$, $\alpha$ are identities. It is a theorem that every monoidal category is equivalent to a strict one ~\cite{maclane1998categories}.

So we can assume without loss of generality that we are working with a strict category equivalent to $\fdVect_\C$ instead. So the $\alpha,\lambda,$ and $\rho$ maps that $\msP$ inherits will be identities. We want to show that the identity morphism is actually generated by our specified morphisms. Consider the following matrix for an object $A$:
\begin{equation*}
I_A=\bordermatrix{ &A&A\cr
A & 0 & 1\cr
A&-1&0\cr}.
\end{equation*} Let $L_A=\sPf(I_A)=|0_A0_A\rangle+|1_A1_A\rangle$ and $R_A=\sPfd(I_A)=\langle0_A0_A|+\langle1_A1_A|$. Then we can contract these these two morphisms along a single edge as in the following picture:

\begin{center}
\begin{tikzpicture}
\draw (-1.25,-.1) node[anchor=east]{$A$};
\draw[->] (-1.25,-.1) -- (0,-.1);
\draw (0,-.1) -- (.5,-.1);
\draw[->] (-.5,-.4) -- (0,-.4);
\draw (0,-.4) -- (.5,-.4);
 \draw (.5,0) rectangle (1.25,-.5) ;
 \draw (.5,-.25) node[anchor=west]{$L_A$};
\draw (-.5,-.3) rectangle (-1.25,-.8);
\draw (-1.25,-.55) node[anchor=west]{$R_A$};
\draw[->] (-.5,-.7) -- (0,-.7); 
\draw (0,-.7)  -- (1.25,-.7);
\draw (1.25,-.7) node[anchor=west]{$A$};
\end{tikzpicture}.
\end{center} This gives us the morphism $|0_A\rangle\langle0_A|+|1_A\rangle\langle1_A|$ which is the identity morphism on $A$. Furthermore, $L_A$ and $R_A$ are the so called "cup" and "cap" morphisms and we have just shown that they satisfy the zig-zag axiom. This shows that $\msP$ has duals for objects.
\end{proof}

\begin{defn}
 The anti-transpose of a matrix $N$, denoted by $\hat{N}$, is $N$ flipped across the non-standard diagonal.
\end{defn}

\begin{lemma}
 $\Pf(\hat{N})=\Pf(N)$.
\end{lemma}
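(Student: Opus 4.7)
The plan is to reduce the anti-transpose to a conjugation operation and then invoke the standard transformation law $\Pf(B^T A B) = \det(B)\Pf(A)$. Since $\Pf$ vanishes on odd-sized skew-symmetric matrices, we may assume $N$ is $2n \times 2n$.

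First I would introduce the reversal matrix $J$ with $J_{ij} = \delta_{i+j, 2n+1}$ (the permutation matrix of the longest element in $S_{2n}$, i.e.\ ones on the anti-diagonal). A direct index computation shows $\hat{N} = J N^T J$. Using skew-symmetry $N^T = -N$, this becomes $\hat{N} = -J N J$. Because $J^T = J$, the standard Pfaffian congruence identity gives
$$\Pf(JNJ) \;=\; \Pf(J^T N J) \;=\; \det(J)\,\Pf(N).$$
Next, I would compute $\det(J)$: the reversal permutation on $2n$ letters decomposes as $n$ disjoint transpositions $(1\,2n)(2\,2n{-}1)\cdots(n\,n{+}1)$, so its sign is $(-1)^n$ and thus $\det(J) = (-1)^n$. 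Finally, from the definition of the Pfaffian as a sum of $n$-fold products of matrix entries, $\Pf(-A) = (-1)^n \Pf(A)$ for any $2n\times 2n$ skew-symmetric $A$. Combining,
$$\Pf(\hat N) = \Pf(-JNJ) = (-1)^n \Pf(JNJ) = (-1)^n(-1)^n \Pf(N) = \Pf(N).$$

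There is no real obstacle here: the only mild care needed is the index bookkeeping to verify $\hat N = J N^T J$ and to track the two $(-1)^n$ factors (one from $\det J$, one from pulling the global sign out of $\Pf$) so that they cancel. An alternative proof that avoids the congruence identity would expand both Pfaffians as signed sums over perfect matchings of $[2n]$ and exhibit the involution $\{i,j\} \mapsto \{2n{+}1{-}j, 2n{+}1{-}i\}$ on matchings, checking that it preserves the signed weight; this is conceptually cleaner but slightly more notationally involved, so I would prefer the matrix-congruence route.
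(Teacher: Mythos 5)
Your proof is correct, but it takes a genuinely different route from the paper's. The paper works directly from the combinatorial definition $\Pf(N)=\sum_{\pi}\sgn(\pi)\prod_{(i_k,j_k)\in\pi}\eta_{i_kj_k}$ and exhibits the relabeling involution $(i_k,j_k)\mapsto(n-j_k+1,n-i_k+1)$ on pairings, checking that it preserves the crossing-number sign and matches up the entry products of $N$ and $\hat N$ --- in other words, the paper's proof is precisely the ``alternative'' you sketch in your last sentence and decline to carry out. Your main argument instead packages the anti-transpose as a congruence, $\hat N=JN^TJ=-JNJ$ with $J$ the reversal matrix, and invokes the transformation law $\Pf(B^TAB)=\det(B)\Pf(A)$; the index check $(JN^TJ)_{ij}=N_{n-j+1,n-i+1}$ is right, $\det(J)=(-1)^n$ on $2n$ letters is right, and $\Pf(-A)=(-1)^n\Pf(A)$ by homogeneity of degree $n$ is right, so the two signs cancel as you claim. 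What each approach buys: yours is shorter and conceptually transparent but leans on the congruence identity as an external black box, while the paper's is self-contained (it restates the definition of $\Pf$ it needs) and its sign-preserving involution is reused implicitly in the corollary that follows, where the relabeling $\indexset{I}\mapsto\tilde{\indexset{I}}$ of index sets is exactly the same reversal. Either proof is acceptable.
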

\begin{proof}

Let $N=\{\eta_{ij}\}$ be an $n\times n$ matrix. If $n$ is odd, the above is trivial, so let $n$ be even. Now let $\mathscr{F}$ be the set of partitions of $[n]$ into pairs, $(i_k,j_k)$, $i_k<j_k$. If $\pi\in\mathscr{F}$ we can define the sign of $\pi$, $\tn{sgn}(\pi)$. This is done by considering the set $[n]$ as a sequence of nodes laid out horizontally and labeled $1,\dots,n$ from left to right. Then if two nodes are paired in $\pi$, connect them with an edge. Then $\tn{sgn}(\pi)$ is $(-1)^k$ where $k$ is the number of places where lines cross. Now we can define $\Pf(N)$ as follows:
$$\Pf(N)=\sum_{\pi\in\mathscr{F}}{\tn{sgn}(\pi)\prod_{(i_k,j_k)\in\pi}{\eta_{i_kj_k}}}.$$
Now let $\eta'_{ij}=\eta_{n-j+1,n-i+1}$ be the entries of $\hat{N}$ and suppose $\pi\in\mathscr{F}$. Then the mapping $\mathscr{F}\to\mathscr{F}:\pi\mapsto \pi'$ given by $(i_k,j_k)\mapsto(n-j_k+1,n-i_k+1)$ is a bijective involution. Note that $\pi'$ is the matching formed from $\pi$ by relabeling the nodes as $n,\dots,1$ from left to right. This preserves the number of crossings of edges so that $\tn{sgn}(\pi')=\tn{sgn}(\pi)$. Thus we get
$$\Pf(\hat{N})=\sum_{\pi\in\mathscr{F}}{\tn{sgn}(\pi)\prod_{(i_k,j_k)}{\eta'_{i_kj_k}}}=$$
$$\sum_{\pi'\in\mathscr{F}}{\tn{sgn}(\pi')\prod_{(n-j_k+1,n-i_k+1)}{\eta_{n-j_k+1,n-i_k+1}}}=\Pf(N).$$
 
\end{proof}

\begin{defn}
If $\indexset{I}$ is a bitstring, let $\tilde{\indexset{I}}$ be the bitstring reflected across a vertical axis.. If $\indexset{I}\subseteq \indexset{N}$, $\tilde{\indexset{I}}$ is formed by considering $\indexset{I}$ as a bitstring representing a characteristic function. Then $\tilde{\indexset{I}}$ is a characteristic function defining another subset of $\indexset{N}$.  Then $|\tilde{\indexset{I}}\rangle=\bigotimes_{i \in \indexset{N}} v_{i,\chi(i,\tilde{\indexset{I}})}$ and $\bra{\tilde{\indexset{I}}}=\bigotimes_{i \in \indexset{N}} v^*_{i,\chi(i,\tilde{\indexset{I}})}$
\end{defn}

\begin{corollary}
 Let $N$ be a skew symmetric matrix with labels $\indexset{M}$. Let $\hat{N}$ also have labels $\indexset{M}$. $\sPf(\hat{N})=\sum_{\indexset{I}\subseteq \indexset{M}}{\Pf(N_\indexset{I})|\tilde{\indexset{I}}\rangle}$
\end{corollary}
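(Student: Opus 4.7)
The plan is to combine the preceding lemma $\Pf(\hat{L})=\Pf(L)$ with a careful bookkeeping calculation describing how the anti-transpose interacts with taking principal minors; the point being that reflecting a matrix across the anti-diagonal corresponds precisely to reversing the ordered label set, which in turn corresponds to reversing the bitstring indexing the principal minor.

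First I would unwind the definition to write
\[
\sPf(\hat{N}) \;=\; \sum_{\indexset{J}\subseteq \indexset{M}} \Pf(\hat{N}_{\indexset{J}})\,|\indexset{J}\rangle,
\]
so the task reduces to expressing $\Pf(\hat{N}_{\indexset{J}})$ in terms of a Pfaffian of a principal minor of $N$ itself. The natural guess, forced by the target formula, is that $\Pf(\hat{N}_{\indexset{J}}) = \Pf(N_{\tilde{\indexset{J}}})$.

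Second, I would establish the matrix identity $\hat{N}_{\indexset{J}} = \widehat{N_{\tilde{\indexset{J}}}}$. Write $\indexset{M} = (m_1,\dots,m_n)$ in its fixed order and encode $\indexset{J}$ as the bitstring $(b_1,\dots,b_n)$ with $b_i=1$ iff $m_i\in\indexset{J}$. Recall from the proof of the previous lemma that the $(i,j)$-entry of $\hat{N}$ is $\eta_{n-j+1,n-i+1}$. Listing the indices of $\indexset{J}$ as $i_1<\cdots<i_k$, a direct computation shows that $(\hat{N}_{\indexset{J}})_{ab}= \eta_{n-i_b+1,\,n-i_a+1}$, while $(N_{\tilde{\indexset{J}}})_{ab} = \eta_{n-i_{k-a+1}+1,\,n-i_{k-b+1}+1}$, since $\tilde{\indexset{J}}$ corresponds to the reversed bitstring and therefore to the reversed set of indices $\{n-i_k+1<\cdots<n-i_1+1\}$. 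Substituting $a'=k-b+1$, $b'=k-a+1$ gives exactly the anti-transpose relation $\hat{N}_{\indexset{J}} = \widehat{N_{\tilde{\indexset{J}}}}$.

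Third, I would apply the previous lemma to $L := N_{\tilde{\indexset{J}}}$ (which is again skew-symmetric, as a principal submatrix of a skew-symmetric matrix) to obtain $\Pf(\hat{N}_{\indexset{J}}) = \Pf(\widehat{N_{\tilde{\indexset{J}}}}) = \Pf(N_{\tilde{\indexset{J}}})$. Plugging this back and reindexing the sum via the involution $\indexset{J}\mapsto\tilde{\indexset{J}}$ (which is clearly a bijection on subsets of $\indexset{M}$) yields
\[
\sPf(\hat{N}) = \sum_{\indexset{J}\subseteq \indexset{M}} \Pf(N_{\tilde{\indexset{J}}})\,|\indexset{J}\rangle = \sum_{\indexset{I}\subseteq \indexset{M}} \Pf(N_{\indexset{I}})\,|\tilde{\indexset{I}}\rangle,
\]
as claimed. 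The main obstacle is the index bookkeeping in step two: one must verify that the indices $i_1<\cdots<i_k$ labeling the minor $\hat{N}_{\indexset{J}}$ correspond, after anti-transposition, to the reversed indices labeling $N_{\tilde{\indexset{J}}}$, but once the bitstring picture is in place the verification is routine and any potential sign issues from reordering rows/columns are absorbed by the skew-symmetry, via the lemma already proved.
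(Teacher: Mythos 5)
Your proposal is correct and follows essentially the same route as the paper: identify each principal minor of $\hat{N}$ with (the anti-transpose of) the corresponding principal minor of $N$ indexed by the reflected subset, apply the lemma $\Pf(\hat{L})=\Pf(L)$, and reindex the sum. Your bookkeeping is in fact more careful than the paper's, which asserts the literal matrix equality $N_{\indexset{I}}=\hat{N}_{\tilde{\indexset{I}}}$ when the two submatrices actually differ by an anti-transposition --- precisely the discrepancy your identity $\hat{N}_{\indexset{J}}=\widehat{N_{\tilde{\indexset{J}}}}$ makes explicit before the lemma absorbs it.
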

\begin{proof}
 
Let $\indexset{I}\subseteq \indexset{M}$. Note that $N_\indexset{I}=\hat{N}_{\tilde{\indexset{I}}}$. Then $\Pf(N_\indexset{I})=\Pf(\hat{N}_{\tilde{\indexset{I}}})$. This gives the result.

\end{proof}

\begin{example}
Consider the following matrix:
 \begin{equation*}
 N=
  \begin{pmatrix}
   0&0&a&0\\
   0&0&0&b\\
   -a&0&0&0\\
   0&-b&0&0
  \end{pmatrix}
 \end{equation*}
$$\sPf(\hat{N})=|0000\rangle+b|1010\rangle+a|0101\rangle-ab|1111\rangle$$
$$=\Pf(N_{\emptyset})|0000\rangle+\Pf(N_{\{2,4\}})|1010\rangle+\Pf(N_{\{1,3\}})|0101\rangle+\Pf(N)|1111\rangle$$
$$=\sum_{\indexset{I}\subseteq \indexset{M}}{\Pf(N_\indexset{I})|\tilde{\indexset{I}}\rangle}.$$
\end{example}

\begin{proposition}
 For any skew-symmetric matrix $M$, 
\[\sum_{\indexset{I}}{\Pf(M_\indexset{I})\langle \indexset{I}|}\]\[
\sum_{\indexset{I}}{\Pf(M_{\bar{\indexset{I}}})|\indexset{I}\rangle}\] are morphisms of $\msP$. This implies that $\msP$ is a dagger monoidal category.
\end{proposition}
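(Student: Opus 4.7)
The plan is to realize each of the two morphisms as a composition of morphisms already generating $\msP$, by bending wires with the cup and cap constructed from $I_A$ in the preceding theorem. First I would recall that $L_A = \sPf(I_A) = \ket{00}+\ket{11}$ and $R_A = \sPfd(I_A) = \bra{00}+\bra{11}$ lie in $\msP$ by definition. For $\indexset{N}\subset\N$, tensoring one copy of $L_{A_i}$ (resp.\ $R_{A_i}$) for each $i\in\indexset{N}$ and composing with braidings (inherited from $\fdVect_\C$) to interleave wires yields morphisms $L_\indexset{N} = \sum_{\indexset{I}\subseteq\indexset{N}} \ket{\indexset{I},\indexset{I}}$ and $R_\indexset{N} = \sum_{\indexset{I}\subseteq\indexset{N}} \bra{\indexset{I},\indexset{I}}$ in $\msP$.

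Next, to handle the first expression I would form
\[
V_\indexset{N} \xrightarrow{\id_{V_\indexset{N}} \otimes \sPf(M)} V_\indexset{N} \otimes V_\indexset{N} \xrightarrow{R_\indexset{N}} I,
\]
and check on a basis vector that $\ket{\indexset{J}} \mapsto \sum_{\indexset{I}} \Pf(M_\indexset{I})\ket{\indexset{J}}\otimes\ket{\indexset{I}} \mapsto \Pf(M_\indexset{J})$, so the composite equals $\sum_{\indexset{I}} \Pf(M_\indexset{I}) \bra{\indexset{I}}$. Symmetrically, for the second expression I would form
\[
I \xrightarrow{L_\indexset{N}} V_\indexset{N} \otimes V_\indexset{N} \xrightarrow{\id_{V_\indexset{N}} \otimes \sPfd(M)} V_\indexset{N},
\]
which evaluated on $1$ produces $\sum_{\indexset{I}} \Pf(M_{\bar{\indexset{I}}})\ket{\indexset{I}}$. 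Since $\id_{V_\indexset{N}}$, $L_\indexset{N}$, $R_\indexset{N}$, $\sPf(M)$, and $\sPfd(M)$ all lie in $\msP$ (the first by the preceding theorem, the rest by the step above and by definition), both composites do as well.

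Finally, the dagger-monoidality of $\msP$ follows because the ambient dagger on $\fdVect_\C$ (transpose in the fixed orthonormal basis, just as for $\mcD$) is an involutive contravariant monoidal functor, and hence distributes over $\circ$ and $\otimes$. The two expressions in the proposition are exactly $\sPf(M)^\dagger$ and $\sPfd(M)^\dagger$, so once we have placed them in $\msP$, the daggers of all generators of $\msP$ lie in $\msP$, which forces the dagger of every composite and tensor product to lie in $\msP$ as well.

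The main obstacle will be the bookkeeping for $L_\indexset{N}$ and $R_\indexset{N}$: assembling them from single-qubit caps and cups requires braiding two bundles of $|\indexset{N}|$ wires into the correct interleaving, and one must verify that the Pfaffian-sign conventions on each block do not introduce stray signs when the tensor is reshuffled. This is routine because each $I_{A_i}$ block has Pfaffian $+1$ and the braidings act as signed permutations that cancel in pairs, but it is the one place where care is needed.
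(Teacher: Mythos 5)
Your overall strategy---obtain $\sum_{\indexset{I}}\Pf(M_{\indexset{I}})\bra{\indexset{I}}$ by composing the generator $\sPf(M)$ with a cap that bends its output wires into inputs, and dually for the second morphism---is the same as the paper's, and your closing argument (once the daggers of the generators lie in $\msP$, the dagger of every composite and tensor product does too) is correct. The gap is in how you build the cap. You assemble $R_{\indexset{N}}=\sum_{\indexset{I}}\bra{\indexset{I},\indexset{I}}$ from single-qubit caps ``composed with braidings to interleave wires,'' but braidings are not morphisms of $\msP$. The paper claims only that $\msP$ is a strict monoidal category with daggers, never that it is symmetric, and this is not an omission: the swap on $\C^2\otimes\C^2$ is the standard example of a gate that is \emph{not} a Pfaffian (match-)gate. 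Concretely, bending its two input legs planarly produces the state $\ket{0000}+\ket{0101}+\ket{1010}+\ket{1111}$, whose coefficients violate the Pfaffian relation $p_{\emptyset}p_{1234}=p_{12}p_{34}-p_{13}p_{24}+p_{14}p_{23}$ (the right side is $-1$, the left side $+1$); and since every monoidal word in the generators of $\msP$ is a planar diagram, no composite yields the swap. The ``parallel'' cap you want is likewise not of the form $\sPfd(K)$: the perfect matching $\{(A_i,A_i')\}$ in the wire order $A_1\cdots A_nA_1'\cdots A_n'$ has $\binom{k}{2}$ crossings on any $k$ activated pairs, so the candidate Pfaffians carry signs $(-1)^{\binom{k}{2}}$ rather than all being $+1$ (already for $n=2$ no skew-symmetric $K$ gives $\sum_{\indexset{I}}\bra{\indexset{I},\indexset{I}}$). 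So the step you flagged as routine bookkeeping is exactly where the argument fails.

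The paper's proof avoids both problems at once by using the \emph{nested} (rainbow) cap $R_A=\sum_{\indexset{I}}\langle\tilde{\indexset{I}}|\langle\indexset{I}|$, which is a single generator: it is $\sPfd$ of the block matrix with the reflected identity $\tilde{I}$ in the off-diagonal block, whose matching is non-crossing, so all signs are $+1$ and no wire permutation is needed. The price is that this cap reverses the order of the wires it closes off, and the paper compensates by feeding in $\sPf(\hat{M})$ (the anti-transpose of $M$) instead of $\sPf(M)$; the lemma $\Pf(\hat{N})=\Pf(N)$ and its corollary $\sPf(\hat{M})=\sum_{\indexset{I}}\Pf(M_{\indexset{I}})\ket{\tilde{\indexset{I}}}$ exist precisely to make this reversal cancel. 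Replacing your interleaved cup and cap by the nested ones and inserting the anti-transpose repairs the proof; as written, it relies on morphisms that $\msP$ does not contain.
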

\begin{proof}
 
Let $M$ have labels $A=\{A_1,\dots,A_n\}$. Then $\hat{M}$ will have labels $\hat{A}=\{A_n,\dots,A_1\}$. Let $R_{A}$ be defined as:
\begin{equation*}
 R_A=\sPf\bordermatrix{ &\hat{A}&A\cr
\hat{A} & 0 & \tilde{I}\cr
A & -\tilde{I}^T&0}
\end{equation*}
where $\tilde{I}$ is the identity matrix reflected over a vertical axis. Then consider the following morphism in $\msP$:

\begin{center}
\begin{tikzpicture}

\draw (.5,-1.23) rectangle (1.9,-2.2);
\draw (1.2,-1.7) node{$\sPf(\hat{M})$};

\draw[->] (-1.25,-1.5) -- (0,-1.5);
\draw (0,-1.5) -- (.5,-1.5);
\draw (-.3,-1.2) node{$A_n$};

\draw (-.18,-1.71) node[anchor=east]{$\vdots$};
\draw[->] (-1.25,-2.1) -- (0,-2.1);
\draw (0,-2.1) -- (.5,-2.1);
\draw (-.3,-2.4) node{$A_1$};

\draw (-2.1,-1.23) rectangle (-1.25,-3.5);
\draw (-1.25,-2.25) node[anchor=east]{$R_{A}$};	

\draw[->] (-1.25,-3.4) -- (0,-3.4);
\draw (0,-3.4) -- (1.25,-3.4);
\draw[->] (-1.25,-2.75) -- (0,-2.75);
\draw (0,-2.75) -- (1.25,-2.75);
\draw (0,-3) node[anchor=east]{$\vdots$};
\draw (1.25,-3.4) node[anchor=west]{$A_n$};
\draw (1.25,-2.75) node[anchor=west]{$A_1$};
\end{tikzpicture}.
\end{center}
This diagram represents the morphism
\[\bigg(\sum_{\indexset{I}\subseteq A}{\Pf(M_\indexset{I})|\tilde{\indexset{I}}\rangle}\bigg)\bigg(\sum_{\indexset{I}\subseteq\{\hat{A}A\}}{\langle\tilde{\indexset{I}}|\langle \indexset{I}|}\bigg)=\]\[
\sum_{\indexset{I}\subseteq A}{\Pf(M_\indexset{I})\langle \indexset{I}|}.\]

We can similarly form $\sum_{\indexset{I}}{\Pf(M_{\bar{\indexset{I}}})|\indexset{I}\rangle}$ by instead using $\sPfd(\hat{M})$ and $\sPf(R_A)$. Now since every generating morphism has a dagger, the entire category has a dagger and it is the usual vector space dagger.
\end{proof}

\begin{theorem} \label{thm:detarepfaf}
 Every morphism in $\mcD$ is a morphism in $\msP$. Thus there is a trace-preserving faithful strict monoidal functor from $\mcD\to\msP$ given by inclusion.
\end{theorem}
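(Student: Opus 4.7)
The plan is to realize every generator $\sDet(M)$ of $\mcD$ as a morphism in $\msP$; once this holds, the inclusion $\mcD \hookrightarrow \msP$ is automatically strict monoidal (both categories sit inside $\fdVect_\C$ and inherit composition, tensor, and unit from it), faithful (inclusions always are), and trace preserving (closed diagrams evaluate to the same scalar in the ambient $\fdVect_\C$).

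For a generator $\sDet(M) \in \Hom_\mcD(V_\indexset{M}, V_\indexset{N})$ with $M$ an $n \times m$ matrix having disjoint row labels $\indexset{N}$ and column labels $\indexset{M}$, I would use the skew-symmetric lift $S(M)$ of the preceding proposition as a bridge. Applying $\sPf$ to $S(M)$ yields the state
\[ \sPf(S(M)) = \sum_{\indexset{I} \subseteq \indexset{N},\ \indexset{J} \subseteq \indexset{M}} \Pf(S(M)_{\indexset{I} \sqcup \indexset{J}})\, \ket{\indexset{I}} \otimes \ket{\indexset{J}} \in \Hom_\msP(\unit,\ V_\indexset{N} \otimes V_\indexset{M}). \]
To convert this state into a morphism $V_\indexset{M} \to V_\indexset{N}$, I bend the right-hand $V_\indexset{M}$ factor down to an input by composing with the cap $\sPfd(I_\indexset{M})$ already constructed in $\msP$:
\[ (\id_{V_\indexset{N}} \otimes \sPfd(I_\indexset{M})) \circ (\sPf(S(M)) \otimes \id_{V_\indexset{M}}). \]
This composition is visibly a morphism of $\msP$ with domain $V_\indexset{M}$ and codomain $V_\indexset{N}$.

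The central verification is that this composition equals $\sDet(M)$, which reduces to the coefficient identity $\Pf(S(M)_{\indexset{I} \sqcup \indexset{J}}) = \det(M_{\indexset{I}\indexset{J}})$ for all pairs of index sets. Every principal minor of $S(M)$ inherits the block structure $\begin{pmatrix} 0 & \tilde{M}' \\ -(\tilde{M}')^T & 0 \end{pmatrix}$, where $\tilde{M}'$ is the corresponding submatrix of $\tilde{M}$, so the block Pfaffian formula $\Pf = (-1)^{k(k-1)/2}\det(\tilde{M}')$ used in the preceding proposition still applies: the Pfaffian vanishes unless $|\indexset{I}|=|\indexset{J}|=k$ (matching non-squareness of $M_{\indexset{I}\indexset{J}}$), and otherwise equals $\pm\det(M_{\indexset{I}\indexset{J}})$ once the column reversal in $\tilde{M}'$ is undone. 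The hard part is keeping two sources of sign coherent, namely the $(-1)^{k(k-1)/2}$ from the block Pfaffian and the $\lfloor k/2 \rfloor$ column swaps relating $\tilde{M}'$ to $M_{\indexset{I}\indexset{J}}$; a short parity check that $k(k-1)/2 + \lfloor k/2 \rfloor$ is always even confirms these cancel on every principal minor exactly as in the full-matrix case. With the coefficient identity in hand, closure of $\msP$ under $\circ$ and $\otimes$ extends the inclusion to all of $\mcD$, and the functorial properties follow from the first paragraph.
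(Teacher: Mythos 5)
Your proposal is correct and follows essentially the same route as the paper: embed $M$ into the skew-symmetric $S(M)$, verify the coefficient identity $\Pf(S(M)_{\indexset{I}\sqcup\indexset{J}})=\det(M_{\indexset{I}\indexset{J}})$, and bend the column wires into inputs using the $\sPfd$-cap (the paper's $R_{\otimes A_i}=\sPfd(S(I_{\otimes A_i}))$, which carries the label reversal needed to pair with $\ket{\tilde{\indexset{J}}}$). The only differences are cosmetic improvements: you make the sign bookkeeping on principal minors explicit via the parity of $k(k-1)/2+\lfloor k/2\rfloor$, and you handle non-square $M$ directly by noting the Pfaffian vanishes on non-square blocks, whereas the paper pads with zero rows/columns and plugs the extra wires with $\ket{0}$ and $\bra{0}$.
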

 \begin{proof}
First suppose that $M$ is an $n\times n$ matrix. The labels of $S(M)=R\cup\tilde{C}$ where $R$ is the row labels of $M$ and $C$ are the column labels of $M$. Now let $K$ be a subset of the labels. Then let $I=K\cap R$ and $\tilde{J}=K\cap\tilde{C}$. Then we get
\begin{equation*}
\Pf(S(M)_K)=\Pf
 \begin{bmatrix}
  0& \tilde{M}_{I,J}\\
-\tilde{M}_{I,J}^T&0
 \end{bmatrix}
=\det(M_{I,J}),
\end{equation*}
so that 
$$\sPf(S(M))=\sum_{I\subseteq R,J\subseteq C}{\det(M_{I,J}|I\rangle|\tilde{J}\rangle}$$
$$\sPfd(S(M))=\sum_{I\subseteq R,J\subseteq C}{\det(M_{\bar{I},\bar{J}})\langle I|\langle\tilde{J}|}.$$

The identity morphism on $A_n\otimes\cdots\otimes A_1$ in $\mcC$ is given by the matrix
\begin{equation*}
I_{\otimes{A_i}}=
 \bordermatrix{ & A_n &A_{n-1}&\cdots&A_1\cr
A_n & 1 & 0&\cdots&0\cr
A_{n-1} &0&1&\cdots&0\cr
\vdots&\vdots&\vdots&\cdots&\vdots\cr
A_1 &0&0&\cdots&1}.
\end{equation*}

Suppose we have an $n\times n$ matrix $M:B_1\otimes\cdots\ot B_n\to A_1\ot\cdots\otimes A_n$. Then we define $M_*=\sPf(S(M))$ and $R_{\otimes{A_i}}=\sPfd(S(I_{\otimes{A_i}}))$. Let us consider the morphism in $\msP$ given by

\begin{center}
\begin{tikzpicture}
\draw (-2.2,-.1) node[anchor=east]{$B_1$};
\draw[->] (-2.2,-.1) -- (0,-.1);
\draw (0,-.1) -- (.5,-.1);
\draw (0,-.3) node[anchor=east]{$\vdots$};
\draw (-2.2,-.7) node[anchor=east]{$B_n$};
\draw[->] (-2.2,-.7) -- (0,-.7);
\draw (0,-.7) -- (.5,-.7);
\draw (.5,0) rectangle (1.25,-2.2);
\draw (.9,-1.1) node{$M_*$};

\draw[->] (-1.25,-1.5) -- (0,-1.5);
\draw (0,-1.5) -- (.5,-1.5);
\draw (-.3,-1.2) node{$A_n$};

\draw (-.18,-1.71) node[anchor=east]{$\vdots$};
\draw[->] (-1.25,-2.1) -- (0,-2.1);
\draw (0,-2.1) -- (.5,-2.1);
\draw (-.3,-2.4) node{$A_1$};

\draw (-2.2,-1.23) rectangle (-1.25,-3.5);
\draw (-1.21,-2.25) node[anchor=east]{$R_{\otimes{A_i}}$};	

\draw[->] (-1.25,-3.4) -- (0,-3.4);
\draw (0,-3.4) -- (1.25,-3.4);
\draw[->] (-1.25,-2.75) -- (0,-2.75);
\draw (0,-2.75) -- (1.25,-2.75);
\draw (0,-3) node[anchor=east]{$\vdots$};
\draw (1.25,-3.4) node[anchor=west]{$A_n$};
\draw (1.25,-2.75) node[anchor=west]{$A_1$};
\end{tikzpicture}.
\end{center}
For $I\subseteq\{B_1,\dots, B_n\}$, $\tilde{J},\tilde{J}'\subseteq \{A_n,\dots, A_1\}$; and $J'\subseteq\{A_1,\dots A_n\}$, we can represent this tensor as 
$$\bigg(\sum{\det(M_{I,J})|I\rangle|\tilde{J}\rangle}\bigg)\bigg(\sum{\langle\tilde{J}'|\langle J'|}\bigg)=$$
$$\sum{\det(M_{I,J})|I\rangle\langle J|}=\sDet(M).$$
 So for any square matrix $M$, $\sDet(M)$ is a morphism in $\msP$. Now not every morphism in $\mcC$ is a square matrix. However, if we have an $n\times m$ matrix $M$, we can make it square. If $n<m$, then let $M'=M\oplus Z_{m-n}$ where $Z_{m-n}$ is the $(m-n)\times 0$ matrix. If $m<n$, then let $M'=M\oplus Z'_{n-m}$ where $Z'_{n-m}$ is the $0\times (n-m)$ matrix. What this amounts to is either adding rows or columns of zeros as needed.

Now note that $\sPf([0])=|0\rangle$. $\langle0|$ is also a morphism in $\msP$. Consider $\sPfd(K)=\langle0_A0_B|+\langle1_A1_B|$ where
\begin{equation*}
 K=\bordermatrix{ & A & B\cr
A & 0 & 1\cr
B & -1 & 0},
\end{equation*}
and contracting this with the morphism $|0_B\rangle$, we obtain $\langle0_A|$.

 Let $M$ be an arbitrary $n\times m$ matrix. Then let us consider $S(M')$ where $M'$ is defined as above. Suppose $n<m$. Then
$$\sPf(S(M'))=\sum_{I,J}{\det(M_{I,J})|I0_{n+1}\cdots0_{m}\rangle\langle J|}$$

\noindent Consider the following diagram in $\msP$:

\begin{center}
\begin{tikzpicture}
 \draw[->] (-1,0) -- (-.5,0);
\draw (-.5,0) -- (0,0);
\draw (-1,0) node[anchor=east]{1};
\draw (-.5,-.3) node{$\vdots$};
\draw[->] (-1,-.7) -- (-.5,-.7);
\draw (-.5,-.7) -- (0,-.7);
\draw (-1,-.7) node[anchor=east]{$n$};
\draw [->] (-1,-1) -- (-.5,-1);
\draw (-.5,-1) -- (0,-1);
\draw (-1,-1) node[anchor=east]{$\langle0|$};
\draw (-.5,-1.3) node{$\vdots$};
\draw[->] (-1,-1.7) -- (-.5,-1.7);
\draw (-.5,-1.7) -- (0,-1.7);
\draw (-1,-1.7) node[anchor=east]{$\langle0|$};
\draw (0,.1) rectangle (2,-1.9);
\draw (1,-.8) node{$\sPf(S(M'))$};
\draw[->] (2,0) -- (2.5,0);
\draw (2.5,0) -- (3,0);
\draw (3,0) node[anchor=west]{1};
\draw (2.5,-.8) node{$\vdots$};
\draw[->] (2,-1.7) -- (2.5,-1.7);
\draw (2.5,-1.7) -- (3,-1.7);
\draw (3,-1.7) node[anchor=west]{$m$};
\end{tikzpicture}.
\end{center}
The morphism this represents will obviously come out to be $\sDet(M)$. If $n>m$, then copies of $|0\rangle$ are added to the extra output wires of $\sPf(S(M'))$. Thus we have finished the proof of theorem. Every morphism of $\mcD$ is in fact a morphism in $\msP$. Furthermore, the reinterpretation of a determinantal circuit as a Pfaffian circuit can obviously be done in polynomial time. 
\end{proof}

Despite this fact, determinantal circuits still have some advantages. If a Pfaffian circuit can be represented as a determinantal circuit, its evaluation will be more efficient. Suppose we have a determinantal circuit with a single morphism from $V_{\indexset{N}}\to V_{\indexset{N}}$ with $|N|=n$. Then we are computing a determinant of an $n\times n$ matrix $M$. Embedding this into a Pfaffian circuit would look like the following (with a bit of simplification):
\begin{center}
\begin{tikzpicture}

\draw[->] (-1.25,-.25) -- (-.25,-.25);
\draw (-.25,-.25) -- (.5,-.25);
\draw (-.1,-1.1) node[anchor=east]{$\vdots$};

\draw (.5,0) rectangle (1.25,-2.2);
\draw (.9,-1.1) node{$M_*$};

\draw (-.3,.1) node{$A_n$};

\draw[->] (-1.25,-2) -- (-.25,-2);
\draw (-.25,-2) -- (.5,-2);
\draw (-.3,-2.4) node{$A_1$};

\draw (-2.2,0) rectangle (-1.25,-2.2);
\draw (-1.21,-1.1) node[anchor=east]{$R_{\otimes{A_i}}$};	

\end{tikzpicture}.
\end{center}
where $M_*=\sPf(S(M))$ as before. Since the evaluation of this circuit involves computing the Pfaffian of a $2n\times 2n$ skew-symmetric matrix, we see a quadratic increase in the time complexity. Consider the case of computing Tutte polynomials of lattice path matriods with Pfaffian circuits. The algorithm will have time complexity $O(n^8)$ using Pfaffian circuits, which is not an improvement over the algorithm given in ~\cite{bonin2007multi}.

\section*{Acknowledgments}
J.M. and J.T. were supported in part by the Defense Advanced Research Projects Agency under Award No. N66001-10-1-4040. Portions of J.T.'s work were sponsored by the Applied Research Laboratory's Exploratory and Foundational Research Program.

\bibliographystyle{plain}
\bibliography{bibfile} 

\def\cdprime{$''$} \def\Dbar{\leavevmode\lower.6ex\hbox to 0pt{\hskip-.23ex
  \accent"16\hss}D} \def\cprime{$'$} \def\cprime{$'$} \def\cprime{$'$}
  \def\cprime{$'$} \def\Dbar{\leavevmode\lower.6ex\hbox to 0pt{\hskip-.23ex
  \accent"16\hss}D} \def\cprime{$'$}
\begin{thebibliography}{10}

\bibitem{arora2009computational}
S.~Arora and B.~Barak.
\newblock {\em Computational complexity: a modern approach}.
\newblock Cambridge University Press, 2009.

\bibitem{bergholm2011categorical}
V.~Bergholm and J.D. Biamonte.
\newblock Categorical quantum circuits.
\newblock {\em Journal of Physics A: Math. Theor.}, 44:245304, 2011.

\bibitem{bonin2003lattice}
J.~Bonin, A.~De~Mier, and M.~Noy.
\newblock {Lattice path matroids: enumerative aspects and Tutte polynomials}.
\newblock {\em Journal of Combinatorial Theory, Series A}, 104(1):63--94, 2003.

\bibitem{bonin2007multi}
J.E. Bonin and O.~Gimenez.
\newblock {Multi-path matroids}.
\newblock {\em Combinatorics, Probability and Computing}, 16(02):193--217,
  2007.

\bibitem{bruns2011relations}
W.~Bruns, A.~Conca, and M.~Varbaro.
\newblock Relations between the minors of a generic matrix.
\newblock {\em Arxiv preprint arXiv:1111.7263}, 2011.

\bibitem{bulatov2010complexity}
A.~Bulatov.
\newblock {The complexity of the counting constraint satisfaction problem}.
\newblock {\em Automata, Languages and Programming}, pages 646--661, 2010.

\bibitem{cai2012complexity}
J.Y. Cai and X.~Chen.
\newblock Complexity of counting {CSP} with complex weights.
\newblock In {\em Proceedings of the 44th symposium on Theory of Computing},
  pages 909--920. ACM, 2012.

\bibitem{MR1401006}
F.~R.~K. Chung and Robert~P. Langlands.
\newblock A combinatorial {L}aplacian with vertex weights.
\newblock {\em J. Combin. Theory Ser. A}, 75(2):316--327, 1996.

\bibitem{Damm03}
C.~Damm, M.~Holzer, and P.~McKenzie.
\newblock The complexity of tensor calculus.
\newblock {\em Computational Complexity}, 11(1):54–--89, 2003.

\bibitem{horn1990matrix}
R.A. Horn and C.R. Johnson.
\newblock {\em Matrix analysis}.
\newblock Cambridge University Press, 1990.

\bibitem{joyal1991geometry}
A.~Joyal and R.~Street.
\newblock {The geometry of tensor calculus. I}.
\newblock {\em Advances in Mathematics}, 88(1):55--112, 1991.

\bibitem{joyal1996traced}
A.~Joyal, R.~Street, and D.~Verity.
\newblock Traced monoidal categories.
\newblock In {\em Mathematical Proceedings of the Cambridge Philosophical
  Society}, volume 119, pages 447--468. Cambridge University Press, 1996.

\bibitem{kassel1995quantum}
C.~Kassel.
\newblock {\em Quantum groups}, volume 155.
\newblock Springer, 1995.

\bibitem{lafont2003towards}
Y.~Lafont.
\newblock Towards an algebraic theory of {B}oolean circuits.
\newblock {\em Journal of Pure and Applied Algebra}, 184(2):257--310, 2003.

\bibitem{landsberg2012holographic}
JM~Landsberg, J.~Morton, and S.~Norine.
\newblock Holographic algorithms without matchgates.
\newblock {\em Linear Algebra and its Applications}, 438(15), 2013.

\bibitem{lloyd2011closed}
S.~Lloyd, L.~Maccone, R.~Garcia-Patron, V.~Giovannetti, Y.~Shikano,
  S.~Pirandola, L.A. Rozema, A.~Darabi, Y.~Soudagar, L.K. Shalm, and
  A.~Steinberg.
\newblock Closed timelike curves via postselection: theory and experimental
  test of consistency.
\newblock {\em Physical Review Letters}, 106(4):40403, 2011.

\bibitem{maclane1998categories}
S.~Mac~Lane.
\newblock {\em {Categories for the working mathematician}}.
\newblock Springer verlag, 1998.

\bibitem{morton2010pfaffian}
J.~Morton.
\newblock Pfaffian circuits.
\newblock {\em Arxiv preprint arXiv:1101.0129}, 2010.

\bibitem{morton2012undecidability}
J.~Morton and J.~Biamonte.
\newblock Undecidability in tensor network states.
\newblock {\em Physical Review A}, 86(3):030301, 2012.

\bibitem{morton2014belief}
Jason Morton.
\newblock Belief propagation in monoidal categories.
\newblock {\em Quantum Physics and Logic 2014, arXiv:1405.2618}, 2014.

\bibitem{morton2013computing}
Jason Morton and Jacob Turner.
\newblock Computing the tutte polynomial of lattice path matroids using
  determinantal circuits.
\newblock {\em arXiv preprint arXiv:1312.3537}, 2013.

\bibitem{mulmuley2001geometric}
K.D. Mulmuley and M.~Sohoni.
\newblock Geometric complexity theory i: An approach to the {P} vs. {NP} and
  related problems.
\newblock {\em SIAM Journal on Computing}, 31(2):496--526, 2001.

\bibitem{selinger2009survey}
P.~Selinger.
\newblock {A survey of graphical languages for monoidal categories}.
\newblock {\em New Structures for Physics}, pages 275--337, 2009.

\bibitem{QCtcbSiPT}
L.~Valiant.
\newblock Quantum circuits that can be simulated classically in polynomial
  time.
\newblock {\em SIAM J. Comput.}, 31(4):1229--1254, 2002.

\end{thebibliography}
\end{document}